\documentclass[a4paper,12pt]{amsart}
\usepackage{amsmath,amssymb,amsfonts,amsthm}
\usepackage{latexsym,mathrsfs}
\usepackage{graphicx}
\usepackage[all]{xy}
\input xypic
\usepackage{color}
\usepackage[pagebackref]{hyperref}
\hypersetup{colorlinks=true,linkcolor=red,citecolor=blue}
\usepackage[OT2,T1]{fontenc}
\usepackage[utf8]{inputenc}
\usepackage{lmodern}
\usepackage{microtype}
\usepackage{enumerate}
\usepackage{tikz-cd}
\usepackage{bm}
\usepackage{ulem}
\usepackage{mathtools}
\usepackage{tensor}
\theoremstyle{plain}
\newtheorem{theorem}[subsection]{{\bf Theorem}}

\newtheorem{corollary}[subsection]{{\bf Corollary}}
\newtheorem{proposition}[subsection]{{\bf Proposition}}
\newtheorem{lemma}[subsection]{{\bf Lemma}}
\theoremstyle{definition}
\newtheorem*{definition}{{\bf Definition}}
\theoremstyle{remark}
\newtheorem{remark}[subsection]{{\it Remark}}
\newtheorem{example}[subsection]{{\it Example}}
\numberwithin{equation}{subsection}

\DeclareMathOperator{\Aut}{Aut}

\DeclareMathOperator{\Sym}{Sym}
\DeclareMathOperator{\Fix}{Fix}
\DeclareMathOperator{\id}{id}
\DeclareMathOperator{\Sub}{Sub}
\DeclareBoldMathCommand{\bbot}{\bot}

\DeclareSymbolFont{cyrletters}{OT2}{wncyr}{m}{n}
\DeclareMathSymbol{\Sha}{\mathalpha}{cyrletters}{"58}

\DeclareMathOperator{\Pold}{Pol}

\DeclareMathOperator{\deff}{def}
\DeclareMathOperator{\Alt}{Alt}
\DeclareMathOperator{\mypartial}{\partial}

\newcommand{\llangle}{\langle\!\langle}
\newcommand{\rrangle}{\rangle\!\rangle}

\begin{document}
\title[Polynomial stability and soficity]{Polynomial permutation stability, soficity, and universal polynomial groups}
\author{Primo\v z Moravec}
\thanks{Faculty of  Mathematics and Physics, University of Ljubljana,
and Institute of Mathematics, Physics and Mechanics,
Slovenia, 
\texttt{primoz.moravec@fmf.uni-lj.si}}
\subjclass[2020]{Primary 20F69; Secondary 20F70, 20F05}
\keywords{Polynomial maps of groups, permutation stability, sofic groups.}
\thanks{ORCID: \url{https://orcid.org/0000-0001-8594-0699}.}
\thanks{The author acknowledges the financial support from the Slovenian Research and Innovation Agency (ARIS), research core funding No. P1-0222, and project No. J1-50001.}
\date{\today}
\begin{abstract}
\noindent
We introduce polynomial permutation stability, extending classical
permutation stability from homomorphisms to polynomial maps of groups.
The universal polynomial group $\Pold_s(G)$ provides a natural
framework for this theory, and we prove that polynomial stability of
degree $s$ is equivalent to permutation stability of $\Pold_s(G)$.
This yields, in particular, polynomial stability of all finite groups
in degree two.

We develop a corresponding theory of polynomial sofic
approximations and show that, for countable groups, it does not give a
new notion of soficity. Nevertheless, the associated universal
polynomial groups contain new structural information: in degree two
they embed into a wreath product, which allows us to characterize their
soficity in terms of the original group. We also formulate and study
weak polynomial stability through the weak stability theory of
universal polynomial groups.

For finite groups, we investigate the structure of higher-degree
universal polynomial groups. We obtain general obstructions to
amenability and prove largeness results for a family of finite groups
with perfect derived subgroup and abelianization of order two.
\end{abstract}


\maketitle

\section{Introduction}
\label{s:intro}

\noindent
Polynomial maps between groups, defined by the vanishing of iterated
finite differences, provide a nonlinear extension of the theory of group
homomorphisms.  They were systematically studied by Leibman
\cite{Lei02} and appear naturally in higher-order Fourier analysis and in
the study of Gowers norms \cite{GT12,GTZ12,JT24}.
Jamneshan and Thom recently obtained an
explicit description of the universal group representing unital
quadratic maps and developed several structural and stability results for
polynomial maps on nonabelian groups \cite{JT24}.  They also construct, for every $s\geq1$, a universal group $\Pold_s(G)$
representing unital polynomial maps of degree at most $s$.

The purpose of this paper is to connect these universal polynomial groups
with permutation stability and sofic approximation.  Recall that a
countable group is permutation stable ($P$-stable for short) if every almost homomorphism into
finite symmetric groups, equipped with normalized Hamming distance, is
close to genuine homomorphisms.  This notion originates in the work of
Glebsky--Rivera \cite{GR09} and has since been developed through
ultraproducts, invariant random subgroups, and action traces; see, among
others, \cite{AP15,BLT19,AP24}.  We replace asymptotic multiplicativity by
asymptotic vanishing of higher differences and introduce the corresponding
notion of polynomial permutation stability of degree $s$ ($P_s$-stability for short). Our first result shows that this higher-degree stability problem is exactly
an ordinary stability problem for the universal polynomial group:

\begin{theorem}
    \label{thm:Pscharacterization}
    A group $G$ is $P_s$-stable if and only if $\Pold_s(G)$ is $P$-stable.
\end{theorem}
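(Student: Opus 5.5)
The plan is to transport approximate solutions back and forth along the universal property of $\Pold_s(G)$. Recall that $\Pold_s(G)$ comes with a unital polynomial map $\iota_s\colon G\to \Pold_s(G)$ of degree at most $s$. Every unital polynomial map $f\colon G\to H$ of degree at most $s$ factors uniquely as $f=\varphi\circ\iota_s$ with $\varphi\colon\Pold_s(G)\to H$ a homomorphism. I would first fix a presentation of $\Pold_s(G)$:
\[
\Pold_s(G)=\bigl\langle\, x_g\ (g\in G)\ \bigm|\ x_1,\ \text{all } (s+1)\text{-fold differences } \partial_{h_1}\cdots\partial_{h_{s+1}}x\,(g)\,\bigr\rangle .
\]
Here each difference is a word $w_{g,h_1,\dots,h_{s+1}}$ in the generators $x_g$. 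With this presentation the definitions line up exactly: a $P_s$-stability instance (a unital map $f\colon G\to\Sym(n)$ whose higher differences are asymptotically trivial in normalized Hamming distance) is an assignment $x_g\mapsto f(g)$ that almost satisfies these relators.

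\textbf{Step 1 (independence of presentation).} I will use the standard fact from the $P$-stability literature (Arzhantseva--P\u{a}unescu; Glebsky--Rivera) that $P$-stability of a countable group can be tested on any presentation, including infinite ones. It is enough that each relator appearing in a finite stage of one presentation is a consequence of finitely many relators of the other. If $G$ is infinite, I phrase everything with the ultraproduct/sequence formulation: for every sequence $f_n$ with $d(\partial^{s+1}f_n(\cdot),1)\to 0$ pointwise, there exist genuine polynomial maps $\tilde f_n$ with $d(f_n(g),\tilde f_n(g))\to0$ for each $g$. Unitality is harmless, because one may replace $f$ by $f(1)^{-1}f$ at negligible cost. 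I should check that the paper's definition of $P_s$-stability matches this pointwise sequence form.

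\textbf{Step 2 (the two directions).} Suppose $\Pold_s(G)$ is $P$-stable and $f_n$ is an asymptotic polynomial map. Then $x_g\mapsto f_n(g)$ is an asymptotic homomorphism of $\Pold_s(G)$ relative to the presentation above. By stability it is close on the generators to homomorphisms $\varphi_n$, and $\tilde f_n=\varphi_n\circ\iota_s$ are genuine unital polynomial maps close to $f_n$. Conversely, suppose $G$ is $P_s$-stable and we are given an asymptotic homomorphism $\psi_n$ of $\Pold_s(G)$, evaluated on the generators $x_g$. Then $f_n(g)=\psi_n(x_g)$ has asymptotically vanishing $(s+1)$-fold differences, since each difference is a fixed relator word. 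Hence $f_n$ is close to genuine polynomial maps $\tilde f_n$, and the induced homomorphisms $\tilde\varphi_n$ agree approximately with $\psi_n$ on the generators. That is exactly stability for this presentation.

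\textbf{Main obstacle.} The delicate step is Step 1: matching the definition of $P_s$-stability, formulated via differences of maps on $G$, with $P$-stability of a possibly non-finitely-presented group $\Pold_s(G)$. Two things must be verified here. First, the difference operators $\partial_h f(g)=f(g)^{-1}f(gh)$ (or whichever convention the paper uses) applied iteratively give words whose almost-vanishing is stable under the convention chosen. Second, Hamming-closeness propagates through words of bounded length, with error at most length times distance by bi-invariance of the normalized Hamming metric. I would handle this by working throughout with the sequence formulation and bi-invariance, which reduces everything to finitely many words at a time.
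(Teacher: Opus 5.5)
Your proposal is correct and takes essentially the same route as the paper. The direction ``$\Pold_s(G)$ $P$-stable $\Rightarrow$ $G$ is $P_s$-stable'' is exactly Lemma~\ref{lem:Pstabrel} applied to the canonical presentation $F_G/R_s(G)$. The converse restricts the almost-homomorphism to $\iota_s(G)$, corrects it using $P_s$-stability, and lifts through the universal property; closeness then propagates from generators to all elements by bi-invariance, which is what your Step~1 supplies. Your extra relators at arbitrary base points $g$ (and the relator $x_1$) are harmless, because the paper's identity rewriting $(\partial_{g_1,\dots,g_{s+1}}f)(x)$ in terms of differences at $1$ holds for arbitrary maps.
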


This observation makes the existing theory of permutation stability
available in the polynomial setting.  

A second theme concerns polynomial analogues of sofic approximations.  We
call a separating almost-polynomial sequence of degree at most $s$ a
polynomial sofic approximation of degree $\le s$.  At first sight this could
define a larger class than the class of sofic groups.  We show that no new
class arises:

\begin{theorem}
    \label{thm:soficcharacterization}
    A countable group $G$ admits a polynomial sofic approximation of degree $\le s$ if and only $G$ is sofic.
\end{theorem}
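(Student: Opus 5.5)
The plan is to prove the two implications separately. The forward direction is formal, and the real content lies in passing from an almost-polynomial approximation back to an almost-homomorphic one.

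\emph{Sofic $\Rightarrow$ polynomial sofic approximation.} Let $G$ be sofic and take a separating sequence of almost homomorphisms $\varphi_n\colon G\to \Sym(n)$. A genuine homomorphism $\varphi$ satisfies $\Delta_h\varphi(g)=\varphi(g)^{-1}\varphi(gh)=\varphi(h)$, which is independent of $g$. Hence every second difference $\Delta_k\Delta_h\varphi$ is trivial. For an almost homomorphism the same identity holds up to an error in normalized Hamming distance. This error is bounded by a fixed multiple of the multiplicativity defects at finitely many pairs of group elements. Consequently all differences of order $s+1\ge 2$ of $\varphi_n$ tend to the identity pointwise, so $(\varphi_n)$ is almost polynomial of degree $\le s$. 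It is separating by hypothesis. Unitality can be arranged, since $\varphi_n(1)$ is already asymptotically trivial.

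\emph{Polynomial sofic approximation $\Rightarrow$ sofic.} I will work in a metric ultraproduct $\mathcal S=\prod_{\mathcal U}\Sym(n)$, which is a universal sofic group, and exploit the fact that countable subgroups of $\mathcal S$ are sofic. The steps, in order, are as follows.

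\begin{enumerate}
\item The sequence $(\varphi_n)$ induces a map $\varphi\colon G\to\mathcal S$. This map is a genuine unital polynomial map of degree $\le s$, because its differences of order $s+1$ vanish exactly in the ultraproduct.
\item The map $\varphi$ is separating, meaning $\varphi(g)\neq\varphi(h)$ for $g\neq h$.
\item By the universal property of $\Pold_s(G)$, there is a homomorphism $\Phi\colon\Pold_s(G)\to\mathcal S$ satisfying $\Phi\circ\iota=\varphi$, where $\iota\colon G\to\Pold_s(G)$ is the universal polynomial map.
\item The image $\Phi(\Pold_s(G))$ is a countable sofic group containing the injective image $\varphi(G)$.
\end{enumerate}

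What remains is to produce from $\varphi$ an honest injective homomorphism of $G$ into $\mathcal S$, or into some sofic group built from $\mathcal S$. For this I would induct on $s$ using derivatives. For fixed $h$, the derivative $\Delta_h\varphi$ has degree $\le s-1$, and $\Delta_h\varphi(1)=\varphi(h)\neq 1$ for $h\ne 1$. The idea is to assemble the derivatives into a single map of lower degree. Fix an enumeration of $G$, and let $\psi\colon G\to\mathcal S^{\mathbb N}$ be the map whose coordinates are translates $x\mapsto\varphi(xg_i)$. Then $\psi$ is still polynomial of degree $\le s$. Right translation intertwines $\psi$ with a genuine action of $G$ on coordinates, which gives a homomorphism $G\to\mathcal S\wr G$.

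To avoid circularity I would replace $G$ by a finite-index or approximate version of this construction. Concretely, the plan is to restrict to finite sets $F\subset G$ and build, for each $F$ and $\varepsilon$, a permutation action on $[n]\times F$-type sets. On such a set, $g$ moves the fibre coordinate by translation and acts on the $[n]$-coordinate by $\varphi_n(x)^{-1}\varphi_n(xg)$. Almost-polynomiality of degree $\le s$, together with induction on $s$, should force these cocycle-type maps to be almost multiplicative on $F$. Separation should come from $\varphi_n(g)$ being far from $1$. The base case $s=1$ is the sofic definition itself.

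The main obstacle is this last step: extracting a \emph{multiplicative} almost action from a polynomial one without already knowing that $G$ is sofic. The ultraproduct gives a homomorphism only from $\Pold_s(G)$, and $G$ is merely a quotient of $\Pold_s(G)$ rather than a subgroup, so soficity does not transfer automatically. My first attempt will be the translation/cocycle construction above. The Følner-free control it needs must come from the identity $\Delta_{h_1}\cdots\Delta_{h_{s+1}}\varphi_n\approx 1$ applied at the finitely many points of $F$.
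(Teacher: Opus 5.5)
Your forward direction is fine and matches the paper (Example~\ref{ex:deg1} plus Proposition~\ref{prop:degreeup}). The converse, however, is not proved. You flag the last step as the obstacle yourself, and the mechanism you propose for it cannot work. The map $c(x,g)=\varphi(x)^{-1}\varphi(xg)$ satisfies $c(x,gh)=c(x,g)\,c(xg,h)$ exactly for \emph{every} map $\varphi\colon G\to\mathcal S$, polynomial or not. It is a coboundary, so $g\mapsto(c(\cdot,g),g)$ is just the untwisted embedding $g\mapsto(1,g)$ of $G$ into $\mathcal S\wr G$, conjugated by $(\varphi,1)$. Polynomiality therefore contributes nothing, and all the difficulty sits in the translation coordinate. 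Turning right translation on a finite set $F\subset G$ into an almost action requires $F$ to be almost invariant, i.e.\ F{\o}lner sets (amenability of $G$). In general it requires exactly the finite models of the regular action that soficity of $G$ would supply, and the identities $\Delta_{h_1}\cdots\Delta_{h_{s+1}}\varphi_n\approx 1$ give no control over that. The derivative induction does not close either: for fixed $h$ the map $g\mapsto\Delta_h\varphi(g)$ is not unital, and after normalizing there is no reason for it to be separating in $g$. Step (2) is also false as stated. Separation only gives $d_{\mathcal U}(\varphi(g),1)=1$ for $g\neq 1$, and polynomial maps need not be injective. For $G=\mathbb Z$, the maps $k\mapsto\tau_p^{k^2}$, with $\tau_p$ a $p$-cycle and $p\to\infty$ prime, form a separating degree-$2$ approximation with $\varphi(1)=\varphi(-1)$. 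In any case, a subset of a sofic group that is in bijection with $G$ through a non-homomorphic map says nothing about $G$.

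The missing idea is to induct on the \emph{group} rather than on derivatives over all of $G$. Let $\varphi\colon G\to\mathcal S$ be a genuine unital polynomial of degree $\le s$ with $s\ge 2$. Fix $\mathbf a=(a_2,\dots,a_s)$ and put $q_{\mathbf a}=\partial_{a_2,\dots,a_s}\varphi$. Then $x\mapsto q_{\mathbf a}(x)q_{\mathbf a}(1)^{-1}$ is a homomorphism. Its image is abelian, because it comes from a difference of a unital quadratic map (Jamneshan--Thom), so it kills $G'$. Hence $(\partial_{k_1,\dots,k_s}\varphi)(1)=1$ for all $k_i\in G'$, i.e.\ $\varphi|_{G'}$ has degree $\le s-1$ (Lemma~\ref{lem:lowerdeg}). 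The restriction inherits separation, so $(\varphi_n|_{G'})_n$ is a polynomial sofic approximation of $G'$ of degree $\le s-1$ (Corollary~\ref{cor:degreelowalmost}). By induction $G'$ is sofic. Since $G/G'$ is abelian, hence amenable, and an extension of a sofic group by an amenable group is sofic, $G$ is sofic. This is the paper's proof. It never needs an injective homomorphism of $G$ into $\mathcal S$, and $\Pold_s(G)$ plays no role in it.
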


The main ingredient is
a degree-reduction argument: the restriction of a degree-$s$ polynomial
map to the derived subgroup has degree at most $s-1$.  Iteration, together
with the closure of soficity under extensions by amenable quotients,
reduces Theorem \ref{thm:soficcharacterization} to the usual degree-one case.
In degree two, the explicit structure theorem of Jamneshan--Thom gives a
more precise result, namely that $G$ is sofic if and only if $\Pold_2(G)$ is sofic. The reverse direction still holds in higher degrees: If $\Pold_s(G)$ is sofic, then $G$ is sofic (Corollary \ref{cor:soficG}). Hence this, together with recently proven existence of non-sofic groups \cite{Ope26}, provides a new source of non-sofic groups. On the other hand, we do not know whether or not the soficity of $G$ implies the soficity of $\Pold_s(G)$ when $s\ge 3$.

A result of Glebsky and Rivera \cite{GR09} shows that finite groups are $P$-stable. Since Jamneshan--Thom \cite{JT24} proved that
$\Pold_2(G)$ is finite whenever $G$ is finite, Theorem \ref{thm:Pscharacterization} immediately yields that every finite group is
$P_2$-stable. On the other hand, a recent result of Alekseev and Thom \cite{AT26} shows that the group $\Pold_3(C_3)$ is far from being amenable, hence $P_s$-stability of finite groups remains open for $s\ge 3$. We address this question by introducing weak polynomial permutation stability of finite groups.  Following
Arzhantseva--P\u{a}unescu \cite{AP15}, one tests both the defining
relations and the non-relations of the canonical presentation of
$\Pold_s(G)$.  We prove that weak $P_s$-stability of $G$ is equivalent to
weak permutation stability of $\Pold_s(G)$.  Consequently, when
$\Pold_s(G)$ is amenable, weak $P_s$-stability is equivalent to residual
finiteness of $\Pold_s(G)$.  We also record a polynomial consequence of
the action-trace criterion of Arzhantseva--P\u{a}unescu \cite{AP24}. These results raise question of when the structure of $\Pold_s(G)$ can be controlled in a sense. The final part of the paper is therefore devoted to studying $\Pold_s(G)$ for finite groups $G$. The computation of
$\Pold_3(C_3)$ by Alekseev--Thom \cite{AT26} shows that higher-degree
universal groups can be infinite and nonamenable even for a finite cyclic
source.  Functoriality therefore gives the general obstruction: If $G$ is finite and 
$3\mid |G^{\mathrm{ab}}|$, then 
$\Pold_s(G)$ is non-amenable for every $s\geq3$.
We complement this with a family whose abelianization is $C_2$.  If
$N$ is a nontrivial finite perfect group and
$G=N\rtimes_\alpha C_2$, where $\alpha^2=1$, we obtain an explicit
relative presentation of $\Pold_s(G)$.  Using a finite-index free subgroup
and Lackenby's theorem on adjoining high-powered relations
\cite{Lac07}, we prove that $\Pold_s(G)$ is large, and hence
non-amenable, for all sufficiently large $s$.  A $2$-deficiency estimate
based on \cite{BT11} gives an explicit sufficient bound.



\section{Preliminaries}
\label{s:prelim}

\subsection{Polynomial maps}
\label{ss:poly_prelim}

For groups $G$ and $H$, let $H^G$ be the set of all maps
$f:G\to H$. Given $g\in G$, we define the left difference operator $\mypartial_g$ on $H^G$ by the rule
$$(\mypartial_g f)(x)=f(gx)f(x)^{-1}.$$
Given $g_1,\ldots ,g_k\in G$, we use the notation
$$\mypartial_{g_1,\ldots,g_k}f=\mypartial_{g_1}\circ\cdots\circ\mypartial_{g_k}f.$$
According Leibman \cite{Lei02}, the map $f$ is said to be \emph{polynomial of degree at most $s$} if
$$(\mypartial_{g_1,\ldots ,g_{s+1}}f)(x)=1$$
for all $x,g_1,\ldots ,g_{s+1}\in G$. We remark here that Leibman \cite{Lei02} uses the right difference operator $(D_gf)(x)=f(x)^{-1}f(xg)$, but it is easy to see that the definitions are equivalent due to the identity
$$(D_gf)(x)=f(x)^{-1}\cdot (\mypartial_{xgx^{-1}}f)(x)\cdot f(x).$$
Also note that the left difference operator is used by Green, Tao and Ziegler \cite{GTZ12} in their (more general) treatment of polynomial maps, while Green and Tao \cite{GT12} use the right difference operator.

Observe that
$$(\mypartial_gf)(x)
=f(g_1x)f(1)^{-1}f(1)f(x)^{-1}
=(\mypartial_{gx}f)(1)\cdot(\mypartial_xf)(1)^{-1}$$ implies
\begin{align*}
    (\mypartial_{g_1,\ldots ,g_{s+1}}f)(x) &=
    (\mypartial_{g_1}(\mypartial_{g_2,\ldots,g_{s+1}}f))(x)\\
    &= (\mypartial_{g_1x}(\mypartial_{g_2,\ldots,g_{s+1}}f))(1)
    \cdot (\mypartial_x(\mypartial_{g_2,\ldots,g_{s+1}}f))(1)^{-1}\\
    &= (\mypartial_{g_1x,g_2,\ldots,g_{s+1}}f)(1)
    \cdot (\mypartial_{x,g_2,\ldots,g_{s+1}}f)(1)^{-1}.
\end{align*}
This shows that $f$ is polynomial of degree $\le s$ if and only if 
$$(\mypartial_{g_1,\ldots ,g_{s+1}}f)(1)=1$$
for all $g_1,\ldots ,g_{s+1}\in G$. This aligns with the definition of polynomial maps used by Jamneshan and Thom \cite{JT24}. 

\subsection{Universal polynomial construction}
\label{ss:universal_prelim}

The material of this subsection is adapted from \cite{JT24}. Let $G$ be a group. Let $F_G$ be the free group on formal generators $\{ x_g\mid g\in G\setminus\{1\}\}$, and set $x_1=1$ by notation. Define a map $\iota :G\to F_G$ by $\iota(g)=x_g$.
Let $R_s(G)$ be the normal closure in $F_G$ of the set of all words $(\mypartial_{g_1,\ldots ,g_{s+1}}\iota)(1)$, where $g_1,\ldots ,g_{s+1}$ run through $G$. Put
$\Pold_s(G)=F_G/R_s(G)$.

\begin{proposition}[\cite{JT24}, Theorem 1.2, Remark 2.5]
    \label{prop:Pold}
    There exists a universal unital polynomial map $\iota_s:G\to\Pold_s(G)$ of degree $s$ such that, 
    for every unital polynomial map $f:G\to H$ of degree $\le s$, there exists a unique homomorphism $\psi_f: \Pold_s(G)\to H$ such that $f=\psi_f\circ \iota_s$.
\end{proposition}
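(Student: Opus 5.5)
Set $\iota_s:G\to\Pold_s(G)$ to be the composite of $\iota:G\to F_G$ with the quotient map $\pi:F_G\to\Pold_s(G)=F_G/R_s(G)$, so $\iota_s(g)=x_gR_s(G)$. It is unital because $x_1=1$. The plan has three steps: show $\iota_s$ is polynomial of degree $\le s$; construct $\psi_f$ from the universal property of the free group; and prove uniqueness from generation.

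The first step rests on one observation. Difference operators commute with post-composition by homomorphisms: if $\varphi:K\to H$ is a homomorphism and $u\in K^G$, then
\[
\mypartial_g(\varphi\circ u)=\varphi\circ\mypartial_g u,
\]
since $\varphi(u(gx))\varphi(u(x))^{-1}=\varphi(u(gx)u(x)^{-1})$. Iterating gives
\[
(\mypartial_{g_1,\ldots,g_{s+1}}(\varphi\circ u))(1)=\varphi\bigl((\mypartial_{g_1,\ldots,g_{s+1}}u)(1)\bigr).
\]
Applying this with $\varphi=\pi$ and $u=\iota$ gives $(\mypartial_{g_1,\ldots,g_{s+1}}\iota_s)(1)=\pi\bigl((\mypartial_{g_1,\ldots,g_{s+1}}\iota)(1)\bigr)=1$, because these words are generators of $R_s(G)$. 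By the criterion in \S\ref{ss:poly_prelim}, that $f$ has degree $\le s$ if and only if all iterated differences vanish at $1$, the map $\iota_s$ is polynomial of degree $\le s$.

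For existence, let $f:G\to H$ be unital polynomial of degree $\le s$. Define $\tilde\psi:F_G\to H$ on free generators by $x_g\mapsto f(g)$ for $g\ne1$. Then $\tilde\psi\circ\iota=f$, and here $f(1)=1$ is needed to match the convention $x_1=1$. By the same commutation identity,
\[
\tilde\psi\bigl((\mypartial_{g_1,\ldots,g_{s+1}}\iota)(1)\bigr)=(\mypartial_{g_1,\ldots,g_{s+1}}f)(1)=1.
\]
So $\tilde\psi$ kills the normal generators of $R_s(G)$, hence kills $R_s(G)$, and descends to a homomorphism $\psi_f:\Pold_s(G)\to H$ with $\psi_f\circ\iota_s=f$.

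Uniqueness holds because $\Pold_s(G)$ is generated by $\iota_s(G)$, so any homomorphism is determined by its values there. The only step needing care is the commutation of $\mypartial$ with homomorphisms under iteration. This is immediate by induction on the number of difference operators, since each $\mypartial_{g_i}$ is applied pointwise to an element of $K^G$. The statement says ``of degree $s$''. Read as ``degree $\le s$'', this is exactly what the construction gives; exactness of the degree is not part of the universal property and is not needed.
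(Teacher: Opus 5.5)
Your proposal is correct and takes the same route as the paper. The paper gives no argument of its own: it cites Jamneshan--Thom and then realizes $\iota_s$ as $q\circ\iota$ on $F_G/R_s(G)$. Your three steps (differences commute with post-composition by homomorphisms, the free-group universal property using $f(1)=1$, and uniqueness from generation by $\iota_s(G)$) are the standard verification behind that citation. Your reading of ``degree $s$'' as ``degree $\le s$'' is also right; for instance, for perfect $G$ the map $\iota_s$ is a homomorphism.
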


Explicitly, if $q:F_G\to\Pold_s(G)$ is the quotient homomorphism, then $\iota_s=q\circ\iota:G\to\Pold_s(G)$ is the universal polynomial map mentioned in Proposition \ref{prop:Pold}. 
Note that $\iota_s$ is injective. Namely, the identity map $\id_G:G\to G$ is a unital polynomial of degree $\le s$, therefore $\psi_{\id_G}\circ\iota_s=\id_G$.

\subsection{Almost-homomorphisms}
\label{ss:almosthom_prelim}
 
This is mainly taken from Becker, Lubotzky, Thom \cite{BLT19}.

Equip the symmetric groups $\Sym(n)$ with the normalized Hamming distance:
$$d_n(\sigma,\tau)=1-\frac{1}{n}|\Fix(\sigma^{-1}\tau)|.$$
The metric $d_n$ is bi-invariant.

Let $G$ be a countable group. 
An \emph{almost-homomorphism} of $G$ is a sequence of maps 
$f_n:G\to\Sym (m_n)$ satisfying
$$\lim_{n\to\infty} d_{m_n}(f_n(g)f_n(h),f_n(gh))=0$$
for all $g,h\in G$. Note that almost-homomorphisms are \emph{asymptotically unital}, that is,
$$\lim_{n_to\infty} d_{m_n}(f_n(1),1)=0.$$

Let $(f_n)_n$ be an almost-homomorphism of $G$. Choose a nonprincipal ultrafilter $\mathcal{U}$ on $\mathbb{N}$, and form the metric ultraproduct \cite{Gol22}
$$\mathfrak{S}_{\mathcal{U}}=\prod_{\mathcal{U}} (\Sym(m_n),d_{m_n})=\frac{\prod_n \Sym(m_n)}{\left\{(\sigma_n)_n\mid \lim_{n\to\mathcal{U}}d_{m_n}(\sigma_n,1)=0\right\}}.$$
Define $f:G\to\mathfrak{S}_{\mathcal{U}}$ by
$$f(g)=[f_n(g)]_{\mathcal{U}}.$$
The fact that $(f_n)_n$ is an almost-homomorphism implies that $f$ is a homomorphism of groups. 

Conversely, 
let $G$ be a countable group and let
$\Theta:G\to\mathfrak{S}_\mathcal{U}$
be a homomorphism into a metric ultraproduct.  For each $g\in G$,
choose representatives
$f_n(g)\in\Sym(m_n)$
such that
$$\Theta(g)=[(f_n(g))_n]_{\mathcal U}.$$
Since $\Theta$ is a homomorphism, for every $g,h\in G$ one has
$$
\lim_{n\to\mathcal U}
d_{m_n}\bigl(f_n(gh),f_n(g)f_n(h)\bigr)=0.
$$
As $G$ is countable, a standard diagonal argument yields a subsequence $(n_k)$ such that
$$\lim_{k\to\infty}d_{m_{n_k}}\bigl(
f_{n_k}(gh),f_{n_k}(g)f_{n_k}(h)
\bigr) =0
$$
for every $g,h\in G$.  Thus, after passing to a subsequence, every
homomorphism of a countable group into a metric ultraproduct of
symmetric groups is represented by an almost homomorphism in the usual
sequential sense.  Moreover, any countable collection of additional
ultralimit conditions may be preserved in the same diagonalization. In
particular, if
$d_{\mathcal U}\bigl(\Theta(g),1\bigr)=1$ for all $g\neq 1$,
then the subsequence may be chosen so that
$$\lim_{k\to\infty}d_{m_{n_k}}\bigl(f_{n_k}(g),1\bigr) =1$$
for every $g\neq 1$.


\subsection{Permutation stability}
\label{ss:pstab_prelim}

The almost-homomorphism $(f_n)_n$ of $G$ is \emph{close to a homomorphism} if there exists a sequence of group homomorphisms $\rho_n:G\to \Sym(m_n)$ such that
$$\lim_{n\to \infty} d_{m_n}(\rho_n(g),f_n(g))=0$$
for all $g\in G$. A group $G$ is \emph{permutation stable} (\emph{$P$-stable} for short) if every almost-homomorphism of $G$ is close to a homomorphism. Equivalently, $G$ is $P$-stable if and only if every homomorphism $f:G\to\mathfrak{S}_\mathcal{U}$ is liftable, i.e., there exists a homomorphism $\phi:G\to\prod_n\Sym(m_n)$ such that $f=Q\circ \phi$, where $Q:\prod_n\Sym(m_n)\to\mathfrak{S}_\mathcal{U}$ is the canonical projection, see \cite[Theorem 4.2]{AP15}.

The following is well-known, we include a proof for completeness:

\begin{lemma}
\label{lem:Pstabrel}
Let $G=F/N$, $N=\langle\!\langle R\rangle\!\rangle$,
where $F$ is a free group  and $R\subseteq F$ is an arbitrary set of relators. Suppose that $G$ is $P$-stable.
Let $f_n\colon F\to\Sym(m_n)$
be homomorphisms such that
$$d_{m_n}\bigl(f_n(r),1\bigr)\longrightarrow 0$$
for every $r\in R$.
Then there exist homomorphisms
$\rho_n\colon G\to\Sym(m_n)$
such that
$$d_{m_n}\bigl(f_n(w),\rho_n(wN)\bigr)\longrightarrow 0$$
for every $w\in F$.
Equivalently, if $\pi\colon F\to G$ is the quotient map, then
$$d_{m_n}\bigl(f_n(w),(\rho_n\circ\pi)(w)\bigr)\longrightarrow0$$
for every $w\in F$.
\end{lemma}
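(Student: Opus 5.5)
The idea is to turn the sequence $(f_n)$ on $F$ into an almost-homomorphism of $G$, apply $P$-stability, and then transfer the resulting homomorphisms back to $F$ through $\pi$. We may assume $G$ is countable, as in the standing setting for $P$-stability. Then $F$ can be taken countably generated, or at least only countably many words are relevant at a time; if $F$ has uncountable rank we restrict to the countable setting implicit in the definition.

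First, we pass to the ultraproduct. Fix a nonprincipal ultrafilter $\mathcal U$ and let $Q\colon\prod_n\Sym(m_n)\to\mathfrak S_{\mathcal U}$ be the canonical projection. The homomorphisms $f_n$ assemble into a homomorphism $\Phi=Q\circ(f_n)_n\colon F\to\mathfrak S_{\mathcal U}$. By hypothesis $\Phi(r)=1$ for every $r\in R$, so $N=\langle\!\langle R\rangle\!\rangle\le\ker\Phi$. Hence $\Phi$ factors as $\Phi=\Theta\circ\pi$ for a homomorphism $\Theta\colon G\to\mathfrak S_{\mathcal U}$.

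Next, we lift $\Theta$. Since $G$ is $P$-stable, the liftability characterization (\cite[Theorem 4.2]{AP15}) gives a homomorphism $\phi=(\rho_n)_n\colon G\to\prod_n\Sym(m_n)$ with $Q\circ\phi=\Theta$. Each $\rho_n\colon G\to\Sym(m_n)$ is then a homomorphism. For every $w\in F$ we have $Q\bigl((f_n(w))_n\bigr)=\Theta(\pi(w))=Q\bigl((\rho_n(wN))_n\bigr)$, that is,
$$\lim_{n\to\mathcal U}d_{m_n}\bigl(f_n(w),\rho_n(wN)\bigr)=0 .$$

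The main obstacle is upgrading this ultralimit convergence to convergence along the whole sequence $n\to\infty$, since we are not allowed to pass to a subsequence. We handle it by contradiction. Suppose the conclusion fails for some $w\in F$. Then there exist $\varepsilon>0$ and an infinite set $I\subseteq\mathbb N$ on which $d_{m_n}\bigl(f_n(w),\rho_n(wN)\bigr)\ge\varepsilon$ for every choice of homomorphisms $\rho_n$.

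Here it is cleaner to avoid the dependence on the choice of $\rho_n$ by working directly with the defect. Set
$$\delta_n=\inf_{\rho}\,\sum_k 2^{-k}\,d_{m_n}\bigl(f_n(w_k),\rho(w_kN)\bigr),$$
where $(w_k)$ enumerates $F$ and $\rho$ ranges over the homomorphisms $G\to\Sym(m_n)$; the infimum is attained because $\Sym(m_n)$ is finite. Choose $\rho_n$ attaining $\delta_n$. It suffices to prove $\delta_n\to0$, because each individual distance is at most $2^{k}\delta_n$.

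If $\delta_n\not\to0$, choose a nonprincipal ultrafilter $\mathcal U$ containing the set $\{n:\delta_n\ge\varepsilon\}$. Applying the two steps above with this $\mathcal U$ gives homomorphisms $\rho_n$ for which the weighted sum tends to $0$ along $\mathcal U$; the tail of the sum is controlled uniformly because every distance is at most $1$. But $\delta_n$ is at most this weighted sum, so $\delta_n\to0$ along $\mathcal U$, contradicting $\delta_n\ge\varepsilon$ on a set in $\mathcal U$.

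The equivalent reformulation in the statement is immediate, since $\rho_n(wN)=(\rho_n\circ\pi)(w)$.
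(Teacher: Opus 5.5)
Your proof is correct, but it takes a different route from the paper. The paper stays entirely in the sequential setting. It fixes a set-theoretic section $s\colon G\to F$ and puts $\varphi_n=f_n\circ s$. It then checks directly that $(\varphi_n)_n$ is an almost-homomorphism of $G$, by writing $s(g)s(h)s(gh)^{-1}\in N$ as a finite product of conjugates of relators and using bi-invariance of $d_{m_n}$. Sequential $P$-stability then produces the $\rho_n$, and the same estimate applied to $w\,s(\pi(w))^{-1}\in N$ transfers closeness to every $w\in F$.

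You instead let the ultraproduct absorb the normal closure, since the kernel of $F\to\mathfrak S_{\mathcal U}$ is normal and contains $R$. You then invoke the liftability characterization recalled in the preliminaries. Finally you recover convergence along the whole sequence using the weighted defect $\delta_n$ and a nonprincipal ultrafilter containing $\{n:\delta_n\ge\varepsilon\}$. That upgrade is sound: the uniform tail bound and the comparison $\delta_n\le$ weighted sum are exactly what is needed. In effect it re-derives the sequential/ultraproduct equivalence in this relative setting.

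The paper's argument is more elementary, needs no external characterization, and works for $F$ of any rank. Yours spares the bookkeeping with conjugates, but needs three small repairs.
\begin{enumerate}
\item Enumerating $F$ presupposes that $F$ is countable. This holds for $F=F_G$ in the application. In general you would run the defect argument on a countable set of coset representatives and then need the paper's finite-product estimate anyway.
\item When $G$ is not finitely generated, the infimum defining $\delta_n$ is attained by compactness of $\mathrm{Hom}(G,\Sym(m_n))$ inside $\Sym(m_n)^G$, not by finiteness of $\Sym(m_n)$. Alternatively, simply take $\rho_n$ within $1/n$ of the infimum.
\item The paragraph beginning ``Suppose the conclusion fails for some $w\in F$'' misstates the negation, since the bad $w$ depends on the choice of $(\rho_n)$. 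It should be deleted, because the $\delta_n$ argument supersedes it.
\end{enumerate}
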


\begin{proof}
Choose a set-theoretic section
$s\colon G\to F$
of the quotient map $\pi\colon F\to G$, and define
$$\varphi_n(g)=f_n(s(g)).$$
Fix $g,h\in G$. Since
$s(g)s(h)s(gh)^{-1}\in N=\langle\!\langle R\rangle\!\rangle$,
there exist $r_1,\ldots,r_k\in R$, $\varepsilon_i\in\{-1,1\}$, and
$u_1,\ldots,u_k\in F$ such that
$$s(g)s(h)s(gh)^{-1}=
\prod_{i=1}^k u_i r_i^{\varepsilon_i}u_i^{-1}.$$
Using that $f_n$ is a homomorphism and that the Hamming metric is
bi-invariant, we obtain
$$
\begin{aligned}
d_{m_n}\bigl(\varphi_n(g)\varphi_n(h),\varphi_n(gh)\bigr)
&=
d_{m_n}\bigl(f_n(s(g)s(h)s(gh)^{-1}),1\bigr)\\
&\leq
\sum_{i=1}^k d_{m_n}\bigl(f_n(r_i),1\bigr)
\longrightarrow0.
\end{aligned}
$$
Hence $(\varphi_n)_n$ is an almost-homomorphism of $G$.
Since $G$ is $P$-stable, there exist homomorphisms
$$\rho_n\colon G\to\Sym(m_n)$$
such that
$$d_{m_n}\bigl(\varphi_n(g),\rho_n(g)\bigr)\longrightarrow0$$
for every $g\in G$.

Now fix $w\in F$. Since
$w\,s(\pi(w))^{-1}\in N$,
the same argument gives
$$d_{m_n}\bigl(f_n(w),f_n(s(\pi(w)))\bigr)\longrightarrow0.$$
Therefore
$$
\begin{aligned}
d_{m_n}\bigl(f_n(w),\rho_n(\pi(w))\bigr)
&\leq
d_{m_n}\bigl(f_n(w),f_n(s(\pi(w)))\bigr)\\
&\quad+
d_{m_n}\bigl(\varphi_n(\pi(w)),\rho_n(\pi(w))\bigr)
\longrightarrow0.
\end{aligned}
$$
This proves the claim.
\end{proof}

    

    

\subsection{Sofic groups}
\label{ss:sofic_prelim}

Let $G$ be a countable group. A \emph{sofic approximation} of $G$ is a sequence of maps $f_n:G\to\Sym(m_n)$ that satisfy
$$\lim_{n\to\infty}d_{m_n}(f_n(gh),f_n(g)f_n(h))=0$$
for all $g,h\in G$, and 
$$\lim_{n\to\infty}d_{m_n}(f_n(g),id)=1$$
for every $g\neq 1$. In other words, a sofic approximation of $G$ is a separating almost-homomorphism of $G$.

The group $G$ is said to be \emph{sofic} if it admits a sofic approximation. Equivalently, $G$ is sofic if and only if there exist integers $m_n$, a nonprincipal ultrafilter
$\mathcal U$ on $\mathbb N$, and a homomorphism
$\Theta:G\to\mathfrak{S}_\mathcal{U}$
such that
$d_{\mathcal U}\bigl(\Theta(g),1\bigr)=1$
for every $g\neq1$.



\section{Almost-polynomials}
\label{s:almostpolynomials}

\noindent

\begin{definition}[Almost-polynomial]
    A sequence of 
    asymptotically unital 
    maps $f_n:G\to\Sym(m_n)$ is \emph{almost-polynomial of degree $\le s$} if
    $$\lim_{n\to\infty} d_{m_n}((\mypartial_{g_1,\ldots ,g_{s+1}}f_n)(1),1)=0$$
    for all $g_1,\ldots ,g_{s+1}\in G$.
\end{definition}

Almost-polynomials are closely related to uniform $\varepsilon$-polynomials, \cite{JT24}. Let $(H,d)$ be a group with bi-invariant metric. A unital map $\phi:G\to H$ is a \emph{uniform $\varepsilon$-polynomial of degree $\le s$} if
    $$d(\mypartial_{g_1,...,g_{d+1}}\phi(1),1)\le\varepsilon$$
    for all $g_1,\ldots ,g_{s+1}\in G$.  Note, for example, that if $G$ is a finite group and $(f_n)_n$ is a unital almost-polynomial of $G$, then, given $\varepsilon>0$, all but possibly finitely many $f_n$ are uniform $\varepsilon$-polynomials $G\to\Sym(m_n)$.

\begin{example}
    \label{ex:deg1}
    Almost-polynomials of degree $1$ are precisely almost-homomorphisms. Assume first the sequence of maps $f_n:G\to\Sym(m_n)$ is an almost-homomorphism. Then $(f_n)_n$ is clearly asymptotically unital and
    \begin{align*}
        d_{m_n}((\mypartial_{h,g}f_n)(1),1) &=
        d_{m_n}(f_n(gh)f_n(h)^{-1}f_n(1)f_n(g)^{-1},1)\\
        &= d_{m_n}(f_n(gh)f_n(h)^{-1}f_n(g)^{-1},f_n(g)f_n(1)^{-1}f_n(g)^{-1})\\
        &\le d_{m_n}(f_n(gh)f_n(h)^{-1}f_n(g)^{-1},1)\\
        &\quad +
        d_{m_n}(1, f_n(g)f_n(1)^{-1}f_n(g)^{-1})\\
        &= d_{m_n}(f_n(gh),f_n(g)f_n(h))+d_{m_n}(f_n(1),1),
    \end{align*}
    and the latter converges to $0$ for all $g,h\in G$. 

    Conversely, let the maps $f_n:G\to\Sym(m_n)$ form an almost-polynomial of $G$ of degree $\le 1$. Then
    \begin{align*}
        d_{m_n}(f_n(gh),f_n(g)f_n(h)) 
        &= d_{m_n}((\mypartial_{h,g}f_n)(1),f_n(g)f_n(1)f_n(g)^{-1})\\
        &\le d_{m_n}((\mypartial_{h,g}f_n)(1),1)\\
        &\quad +d_{m_n}(1,f_n(g)f_n(1)f_n(g)^{-1})\\
        &=d_{m_n}((\mypartial_{h,g}f_n)(1),1)+d_{m_n}(1,f_n(1)),
    \end{align*}
    and the latter converges to $0$ for all $g,h\in G$.
\end{example}

\begin{proposition}
    \label{prop:degreeup}
    Let a sequence of maps $f_n:G\to\Sym(m_n)$ be an almost-polynomial of degree $\le s$. Then it is also almost-polynomial of degree $\le s+1$.
\end{proposition}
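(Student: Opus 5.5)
Since $(f_n)_n$ is asymptotically unital by hypothesis, only the vanishing of the $(s+2)$-fold differences at $1$ needs checking. The plan is to write the outer difference $\mypartial_{g_1}$ as a product of two $(s+1)$-fold differences at shifted points, and then use the identity from Subsection~\ref{ss:poly_prelim} to move those points back to $1$. For $h=\mypartial_{g_2,\ldots,g_{s+2}}f_n$ we have, by definition,
$$(\mypartial_{g_1,\ldots,g_{s+2}}f_n)(1)=(\mypartial_{g_1}h)(1)=h(g_1)\,h(1)^{-1}.$$
Bi-invariance of $d_{m_n}$ (together with $d(\sigma^{-1},1)=d(\sigma,1)$ and the triangle inequality) then gives
$$d_{m_n}\bigl((\mypartial_{g_1,\ldots,g_{s+2}}f_n)(1),1\bigr)\le d_{m_n}\bigl(h(g_1),1\bigr)+d_{m_n}\bigl(h(1),1\bigr).$$

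The second term is $d_{m_n}((\mypartial_{g_2,\ldots,g_{s+2}}f_n)(1),1)$, which tends to $0$ by the degree $\le s$ hypothesis. For the first term we need to control an $(s+1)$-fold difference at the point $x=g_1$ rather than at $1$. We use the identity established in Subsection~\ref{ss:poly_prelim}, valid for arbitrary maps:
$$(\mypartial_{a_1,\ldots,a_{s+1}}f)(x)=(\mypartial_{a_1x,a_2,\ldots,a_{s+1}}f)(1)\cdot(\mypartial_{x,a_2,\ldots,a_{s+1}}f)(1)^{-1}.$$
It is a formal consequence of $(\mypartial_g F)(x)=(\mypartial_{gx}F)(1)(\mypartial_x F)(1)^{-1}$ applied to $F=\mypartial_{a_2,\ldots,a_{s+1}}f$. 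Taking $a_i=g_{i+1}$ and $x=g_1$ and applying the triangle inequality and bi-invariance once more, we get
$$d_{m_n}\bigl(h(g_1),1\bigr)\le d_{m_n}\bigl((\mypartial_{g_2g_1,g_3,\ldots,g_{s+2}}f_n)(1),1\bigr)+d_{m_n}\bigl((\mypartial_{g_1,g_3,\ldots,g_{s+2}}f_n)(1),1\bigr).$$
Both terms on the right tend to $0$ by hypothesis, since they are $(s+1)$-fold differences at $1$ with fixed group elements.

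This settles the claim: for each fixed $(s+2)$-tuple, the distance is bounded by a sum of three terms, each tending to $0$. There is no real obstacle. The only point needing care is that the shifting identity is purely algebraic and uses no polynomiality, so it applies to the approximate maps $f_n$. Note also that the argument never uses asymptotic unitality beyond its being part of the definition.
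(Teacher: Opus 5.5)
Your proof is correct and matches the paper's argument step for step. You write the $(s+2)$-fold difference at $1$ as $h(g_1)h(1)^{-1}$ with $h=\mypartial_{g_2,\ldots,g_{s+2}}f_n$, bound it by the triangle inequality and bi-invariance, and treat $h(g_1)$ with the shifting identity of Subsection~\ref{ss:poly_prelim}. This produces the same two $(s+1)$-fold differences $(\mypartial_{g_2g_1,g_3,\ldots,g_{s+2}}f_n)(1)$ and $(\mypartial_{g_1,g_3,\ldots,g_{s+2}}f_n)(1)$ that the paper uses.
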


\begin{proof}
    Let $g_1,\ldots ,g_{s+1},g_{s+2}\in G$. Let $\phi_n=\mypartial_{g_2,\ldots ,g_{s+2}}f_n$. The map $\phi_n$ is asymptotically unital. We have 
    \begin{align*}
        d_{m_n}((\mypartial_{g_1,\ldots ,g_{s+2}}f_n)(1),1)
        &= 
        d_{m_n}((\mypartial_{g_1}\phi_n)(1),1)\\
        &= d_{m_n}(\phi_n(g_1)\phi_n(1)^{-1},1)\\
        &\le d_{m_n}(\phi_n(g_1),1)+d_{m_n}(\phi_n(1),1).
    \end{align*}
    The second summand of the latter sum tends to $0$. Also note that (Subsection \ref{ss:poly_prelim})
    \begin{align*}
        \phi_n(g_1) &= (\mypartial_{g_2,\ldots ,g_{s+2}}f_n)(g_1)\\
        &= (\mypartial_{g_2g_1,g_3,\ldots ,g_{s+2}}f_n)(1)
        (\mypartial_{g_1,g_3,\ldots ,g_{s+2}}f_n)(1)^{-1}
    \end{align*}
    and so the first summand goes to $0$ as well.
\end{proof}

\begin{lemma}
    \label{lem:lowerdeg}
    Let $f:G\to H$ be a unital polynomial of degree $\le s$, $s\ge 2$. Then there exists a normal subgroup $K$ of $G$ such that $G/K$ is abelian and $f\big|_K$ is a unital polynomial of degree $\le s-1$.
\end{lemma}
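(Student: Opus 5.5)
The plan is to take $K=[G,G]$, which is normal with $G/K$ abelian, and to show that $f\big|_K$ is a unital polynomial of degree $\le s-1$. Unitality is inherited from $f$. By the reduction in Subsection \ref{ss:poly_prelim}, which applies verbatim to the group $K$, it suffices to prove
$$(\mypartial_{k_1,\ldots,k_s}f)(1)=1\qquad\text{for all }k_1,\ldots,k_s\in K.$$
Note that these differences are computed in $K$, but they coincide with the differences of $f$ computed in $G$. The strategy is to show that a single difference along an element of $K$ already lowers the degree by two, in the sense that $\mypartial_k F$ has degree $\le d-2$ whenever $F$ has degree $\le d$ and $k\in K$.

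\textbf{Operator identities.} For an arbitrary map $F:G\to H$ I will record:
$$(\mypartial_{gh}F)(x)=(\mypartial_gF)(hx)\cdot(\mypartial_hF)(x),\qquad (\mypartial_{g^{-1}}F)(x)=\bigl((\mypartial_gF)(g^{-1}x)\bigr)^{-1}.$$
Together with the Leibniz-type rule
$$\mypartial_g(AB)(x)=A(gx)\,(\mypartial_gB)(x)\,A(gx)^{-1}\cdot(\mypartial_gA)(x),$$
these let me expand $\mypartial_{[a,b]}F$ as a product of left-translates (and inverses) of $\mypartial_aF$, $\mypartial_bF$, and terms of the form $\mypartial_a\mypartial_bF$ and $\mypartial_b\mypartial_aF$. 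The first-order terms should cancel modulo second-order ones. This is the nonabelian analogue of $\Delta_{[a,b]}\approx[\Delta_a,\Delta_b]$. Left translation $x\mapsto F(hx)$ preserves degree, because $\mypartial_g$ commutes with right multiplication of the argument only up to conjugating $g$, and the defining condition quantifies over all $g_i$.

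\textbf{Closure property.} To turn this into a degree statement I need that maps of degree $\le d$ are closed under pointwise products, inverses and conjugation, with the degree bounds of Leibman's theory \cite{Lei02}: a product of maps of degree $\le d$ has degree $\le d$, and a "commutator-type" cancellation lowers degree. I would first try to prove the needed closure by induction on $d$ directly from the Leibniz rule above. The conjugation term $A(gx)(\cdot)A(gx)^{-1}$ is exactly where nonabelian $H$ causes trouble.

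\textbf{Fallback.} If that induction fails, I would replace $H$ by the universal group via Proposition \ref{prop:Pold}. Since $f=\psi_f\circ\iota_s$, it suffices to treat $\iota_s:G\to\Pold_s(G)$. There I would check the needed identities on the generators $(\mypartial_{g_1,\ldots,g_{s+1}}\iota)(1)$, that is, on the relators.

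\textbf{Conclusion.} Once $\mypartial_c$ lowers degree by two for a commutator $c$, the product formula for $\mypartial_{gh}$ (using translation invariance and closure under products) extends this to all $k\in K$. Then, for $k_1,\ldots,k_s\in K$, the map $\mypartial_{k_2,\ldots,k_s}f$ has degree $\le 1$, and $\mypartial_{k_1}$ of it has degree $\le -1$, meaning it is trivial; this gives the required identity. The main obstacle is the closure/cancellation step for nonabelian targets.
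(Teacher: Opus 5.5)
Your overall skeleton is sound: take $K=G'$, then show that $\partial_{k_1}$ kills the degree-$\le 1$ map $\partial_{k_2,\dots,k_s}f$. But there is a genuine gap. The key claim you plan to prove is false: that $\partial_kF$ has degree $\le d-2$ for \emph{every} $F$ of degree $\le d$ and every $k\in G'$.

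\begin{itemize}
\item \emph{Case $d=1$.} Take $F=\id_G$ with $G$ nonabelian. Then $F$ has degree $\le 1$, yet $\partial_kF\equiv k\neq 1$ for $1\neq k\in G'$. This is exactly the case your Conclusion invokes.
\item \emph{Case $d=2$.} Take the unital quadratic map $F(g)=(1\otimes\bar g,g)$ into $M\rtimes G\cong G^{\rm ab}\wr G$; this is $\Phi(0,g)$ from the proof of Proposition \ref{prop:soficPol2}. A direct computation gives $\partial_kF(x)=((1-k)\otimes\bar x,k)$ for $k\in G'$. Hence $\partial_g\partial_kF\equiv((1-k)\otimes\bar g,1)$, which is nontrivial for $G=\Sym(3)$, $k$ a $3$-cycle and $g$ a transposition.
\end{itemize}
So no degree drop for $\partial_kF$ as a map on all of $G$ can hold.

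The tools you intended to use are not available either. Leibman's closure of polynomial maps under pointwise products needs a nilpotent target. The pointwise product of two homomorphisms into a free group need not be polynomial of any degree, e.g.\ $n\mapsto a^nb^n$ on $\mathbb Z$. And $\Pold_s(G)$ is not nilpotent in general, so your fallback to the universal group does not rescue the closure step.

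What is true depends on two things: $\partial_{k_2,\dots,k_s}f$ is an $(s-1)$-fold difference of a degree-$s$ map with $s\ge 2$, and the difference along $G'$ is the \emph{outermost} one. The paper uses exactly this.

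\begin{itemize}
\item For $q=\partial_{a_2,\dots,a_s}f$ one has $q(x)=\lambda(x)q(1)$ with $\lambda$ a homomorphism, so $\partial_{k_1}q\equiv\lambda(k_1)$.
\item The missing idea is that $\lambda(G)$ is abelian. This is Jamneshan--Thom, Proposition 2.4, applied to the unital quadratic map obtained by normalizing $\partial_{a_3,\dots,a_s}f$.
\item Then $\lambda$ kills $G'$, and $\partial_{k_1,a_2,\dots,a_s}f\equiv 1$ for $k_1\in G'$ and arbitrary $a_i$. The paper takes $K=\bigcap_{\mathbf a}\ker\lambda_{\mathbf a}\supseteq G'$.
\end{itemize}

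You can get the abelianness from your own product rule for $\partial_{gh}$:
\begin{itemize}
\item For a unital quadratic $\mu$, write $\mu(ax)=\lambda_a(x)\mu(a)\mu(x)$ with $\lambda_a$ a homomorphism.
\item Expanding $\mu(abx)$ in two ways gives $\lambda_{ab}(x)=\lambda_a(b)\,\lambda_a(x)\,\mu(a)\lambda_b(x)\mu(a)^{-1}\,\lambda_a(b)^{-1}$.
\item A pointwise product of two homomorphisms is a homomorphism only if their images commute. Hence $\lambda_a(G)$ commutes with $\mu(a)\lambda_b(G)\mu(a)^{-1}$.
\item Taking $b=a^{-1}$, where $\lambda_1\equiv 1$, yields $\mu(a)\lambda_{a^{-1}}(x)\mu(a)^{-1}=\lambda_a(x)^{-1}$. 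So $\lambda_a(G)$ commutes with itself, i.e.\ it is abelian.
\end{itemize}
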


\begin{proof}
    Fix $\mathbf{a}=(a_2,\ldots ,a_s)\in G^{s-1}$. Denote $q_{\mathbf{a}}=\mypartial_{a_2,\ldots ,a_s}f$. As $f$ is polynomial of degree $\le s$, the map
    $\lambda_{\mathbf{a}}(x)=q_{\mathbf{a}}(x)q_{\mathbf{a}}(1)^{-1}$ is a homomorphism \cite[Lemma 2.1]{JT24}. We claim that the range of $\lambda_{\mathbf{a}}$ is abelian. Denote $\mu=\mypartial_{a_3,\ldots ,a_s}f$, and note that $q_{\mathbf{a}}=\mypartial_{a_2}\mu$. Setting $\mu^*(x)=\mu(x)\mu(1)^{-1}$, we have that $\mu^*$ is a unital quadratic map, and $q_{\mathbf{a}}=\mypartial_{a_2}\mu=\mypartial_{a_2}\mu^*$. Fix $k\in G$ and let $\beta_k(x)=\mu^*(k)^{-1}\mypartial_k\mu^*(x)$. By \cite[Proposition 2.4]{JT24}, the group $\beta_k(G)$ is abelian for every $k\in G$. Observe that
    $$\beta_{a_2}(x)=\mu^*(a_2)^{-1}q_{\mathbf{a}}(x)$$
    and 
    $$q_{\mathbf{a}}(1)=\mypartial_{a_2}\mu^*(1)=\mu^*(a_2)\mu^*(1)^{-1}=\mu^*(a_2).$$ 
    These two yield
    $$\lambda_{\mathbf{a}}(x)=\mu^*(a_2)\beta_{a_2}(x)\mu^*(a_2)^{-1},$$
    hence $\lambda_{\mathbf{a}}(G)=\mu^*(a_2)\beta_{a_2}(G)\mu^*(a_2)^{-1}$
    is abelian.

    Set
    $$K=\bigcap_{\mathbf{a}\in G^{s-1}}\ker\lambda_{\mathbf{a}}.$$
    The diagonal map
    $$G\to \prod_{\mathbf{a}\in G^{s-1}}\lambda_{\mathbf{a}}(G)$$
    has kernel $K$ and abelian target. This proves that $G/K$ is abelian.

    Pick $k_1,\ldots ,k_s\in K$. Then
    \begin{align*}
        \mypartial_{k_1,\ldots ,k_s}f(1) &= \mypartial_{k_1}q_{(k_2,\ldots ,k_s)}(1)\\
        &= q_{(k_2,\ldots ,k_s)}(k_1)q_{(k_2,\ldots ,k_s)}(1)^{-1}\\
        &= \lambda_{(k_2,\ldots ,k_s)}(k_1)\\
        &= 1,
    \end{align*}
    where the last step follows from the fact that $k_1\in K\subset\ker \lambda_{(k_2,\ldots ,k_s)}$.
\end{proof}

\begin{corollary}
    \label{cor:degreelowalmost}
    Let $(f_n)_n$ be an almost-polynomial of $G$ of degree $\le s$, where $s\ge 2$. Then $(f_n\big|_{G'})_n$ is an almost-polynomial of $G'$ of degree $\le s-1$.
\end{corollary}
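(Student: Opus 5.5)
Pass to the metric ultraproduct and apply Lemma \ref{lem:lowerdeg} to the limit map. Fix a nonprincipal ultrafilter $\mathcal U$ and set $f(g)=[f_n(g)]_{\mathcal U}\in\mathfrak{S}_{\mathcal U}$. Asymptotic unitality gives $f(1)=1$. The difference operators are defined by finitely many products and inverses of values of $f_n$, and the quotient map $\prod_n\Sym(m_n)\to\mathfrak{S}_{\mathcal U}$ is a homomorphism. Hence $(\mypartial_{g_1,\ldots,g_{s+1}}f)(1)=[(\mypartial_{g_1,\ldots,g_{s+1}}f_n)(1)]_{\mathcal U}$, and the almost-polynomial hypothesis makes this the identity. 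So $f$ is a genuine unital polynomial map $G\to\mathfrak{S}_{\mathcal U}$ of degree $\le s$.

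Apply Lemma \ref{lem:lowerdeg} to $f$. It gives a normal subgroup $K$ with $G/K$ abelian and $f|_K$ polynomial of degree $\le s-1$. Since $G/K$ is abelian, $G'\le K$. A polynomial map of degree $\le s-1$ on $K$ restricts to one on any subgroup, because the defining identity $(\mypartial_{k_1,\ldots,k_s}f)(1)=1$ is only required for fewer tuples. Thus $f|_{G'}$ is a unital polynomial of degree $\le s-1$. In terms of the sequence, for all $k_1,\ldots,k_s\in G'$,
$$\lim_{n\to\mathcal U} d_{m_n}\bigl((\mypartial_{k_1,\ldots,k_s}f_n)(1),1\bigr)=0.$$

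The remaining step is to upgrade this ultralimit to an honest limit, and I expect it to be the main obstacle. The argument above holds for every nonprincipal ultrafilter $\mathcal U$. Suppose that for some $k_1,\ldots,k_s\in G'$ the real sequence $a_n=d_{m_n}((\mypartial_{k_1,\ldots,k_s}f_n)(1),1)$ fails to converge to $0$. Then there are $\varepsilon>0$ and an infinite set $A\subseteq\mathbb N$ with $a_n\ge\varepsilon$ for all $n\in A$. Choose a nonprincipal ultrafilter $\mathcal U$ containing $A$; then $\lim_{\mathcal U}a_n\ge\varepsilon$, contradicting the display. Hence $a_n\to0$ for every tuple in $G'$.

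Asymptotic unitality of $(f_n|_{G'})_n$ is inherited from $(f_n)_n$, since $1\in G'$. Therefore $(f_n|_{G'})_n$ is an almost-polynomial of $G'$ of degree $\le s-1$. This argument does not use countability of $G$, so no diagonalization is needed.
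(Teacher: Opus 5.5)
Your proof is correct and follows the paper's argument: pass to the metric ultraproduct, apply Lemma~\ref{lem:lowerdeg} to the limit map $f$, and use that the ultralimit vanishes for every nonprincipal $\mathcal U$ to obtain an honest limit. You also fill in steps the paper leaves implicit, all correctly: that $G'\le K$, that the tuples range over $G'$ (the paper's final display says $G$, which is a slip), and the choice of an ultrafilter containing $A$ for the final upgrade.
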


\begin{proof}
    Let $(f_n)_n$ be an almost-polynomial of $G$ of degree $\le s$, $s\ge 2$. Pick an arbitrary nonprincipal ultrafilter $\mathcal{U}$, and form the metric ultraproduct
    $$\mathfrak{S}_{\mathcal{U}}=\prod_{\mathcal{U}} (\Sym(n),d_{m_n}),$$
    Define $f:G\to\mathfrak{S}_{\mathcal{U}}$ by
    $$f(g)=[f_n(g)]_{\mathcal{U}}.$$ By definition of $\mathfrak{S}_{\mathcal{U}}$, the asymptotic identities 
    $$d_{m_n}((\mypartial_{g_1,\ldots ,g_{s+1}}f_n)(1),1)\to 0$$
     yield
    $$\mypartial_{g_1,\ldots ,g_{s+1}}f(1)=1$$
    for all $g_1,\ldots ,g_{s+1}\in G$. Therefore $f$ is a genuine unital polynomial of degree $\le s$.
     By Lemma \ref{lem:lowerdeg}, the restriction of $f$ to $G'$ is a polynomial of degree $\le s-1$, that is,
    $$\lim_{n\to\mathcal{U}}d_{m_n}
    ((\partial_{g_1,\dots,g_s}f_n)(1),1)=0$$
    for all $g_1,\ldots ,g_s\in G$.
    As this is true for every nonprincipal ultrafilter $\mathcal{U}$, we conclude that
    $$\lim_{n\to\infty}d_{m_n}
    ((\partial_{g_1,\dots,g_s}f_n)(1),1)=0,$$
    and the result follows.
\end{proof}

The next consequence can be compared with \cite[Theorem 5.5]{JT24}:
\begin{corollary}
    \label{cor:perfect}
    Let $G$ be a perfect group and let $(f_n)_n$ be an almost-polynomial of $G$. Then $(f_n)_n$ is an almost-homomorphism of $G$.
\end{corollary}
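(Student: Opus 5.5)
The plan is to reduce the degree step by step with Corollary \ref{cor:degreelowalmost}, using that a perfect group equals its own derived subgroup, and then finish with Example \ref{ex:deg1}.

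Let $(f_n)_n$ be an almost-polynomial of $G$ of degree $\le s$ for some $s\ge 1$. If $s=1$, Example \ref{ex:deg1} already gives that $(f_n)_n$ is an almost-homomorphism. For $s\ge 2$ we argue by induction on $s$. Since $G$ is perfect, $G'=G$. Corollary \ref{cor:degreelowalmost} then says that $(f_n|_{G'})_n=(f_n)_n$ is an almost-polynomial of $G'=G$ of degree $\le s-1$. The asymptotic unitality required in the definition holds automatically, since the maps themselves are unchanged. Iterating this $s-1$ times shows that $(f_n)_n$ is almost-polynomial of degree $\le 1$, and Example \ref{ex:deg1} concludes the argument.

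The one point to check is the application of Corollary \ref{cor:degreelowalmost}. Its conclusion involves differences $\mypartial_{g_1,\dots,g_s}$ with all $g_i$ ranging over $G'$; in its proof the displayed $g_i\in G$ should really be read as $g_i\in G'$. Since $G'=G$ here, this distinction disappears, so the step is routine and I expect no real obstacle. Note also that the underlying Lemma \ref{lem:lowerdeg} only yields a normal subgroup $K$ with $G/K$ abelian, hence $K\supseteq G'$. Perfectness therefore forces $K=G$, and this is exactly what makes the reduction apply to the whole group at each step.
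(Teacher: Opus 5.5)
Your argument is correct and is essentially the paper's: the paper states this corollary without separate proof, as an immediate consequence of iterating Corollary \ref{cor:degreelowalmost} with $G'=G$ down to degree $\le 1$ and then applying Example \ref{ex:deg1}. You are also right that the displayed conclusion in the proof of Corollary \ref{cor:degreelowalmost} should quantify over $g_i\in G'$, which makes no difference here since $G'=G$.
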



\section{Polynomial permutation stability}
\label{s:polystability}

\noindent

\begin{definition}[Close to polynomial]
    A sequence of maps $f_n:G\to\Sym(m_n)$ is \emph{close to a polynomial of degree $\le s$} if there exists a sequence of unital polynomials $\rho_n:G\to \Sym(m_n)$ of degree $\le s$ such that
    $$\lim_{n\to \infty} d_{m_n}(\rho_n(g),f_n(g))=0$$
for all $g\in G$.
\end{definition}

\begin{definition}[Permutation stable of degree $s$]
    A group $G$ is \emph{permutation stable of degree $\le s$} (\emph{$P_s$-stable} for short) if every almost-polynomial of degree $\le s$ of $G$ is close to a polynomial of degree $\le s$.
\end{definition}


\begin{proof}[Proof of Theorem \ref{thm:Pscharacterization}]
    We assume the notations set in Subsection \ref{ss:universal_prelim}.

    Let $G$ be a $P_s$-stable group. Let $f_n:\Pold_s(G)\to\Sym(m_n)$ be an almost-homomorphism of $\Pold_s(G)$. Define $\phi_n:G\to\Sym(m_n)$ by the rule $\phi_n=f_n\circ \iota_s$. As $\iota_s$ is unital and $(f_n)_n$ is an almost-homomorphism, it follows that $(\phi_n)_n$ is asymptotically unital. Fix $\mathbf{g}=(g_1,\ldots ,g_{s+1})\in G^{s+1}$. Then
    \begin{align*}
    d_{m_n}((\mypartial_{\mathbf{g}}\phi_n)(1),1) &\le
    d_{m_n}((\mypartial_{\mathbf{g}}(f_n\circ \iota_s))(1),
    f_n((\mypartial_{\mathbf{g}}\iota_s)(1)))\\
    &\quad + d_{m_n}(f_n((\mypartial_{\mathbf{g}}\iota_s)(1)),1). 
    \end{align*}
    As $\iota_s$ is a polynomial of degree $\le s$ and $(f_n)_n$ is an almost-homomorphism, we have that
    the latter sum converges to 0, therefore $(\phi_n)_n$ is an almost-polynomial of degree $\le s$ of $G$. By the $P_s$-stability of $G$ there exist unital polynomials $\rho_n:G\to\Sym(m_n)$ of degree $\le s$ with
    $$\lim_{n\to\infty}d_{m_n}(\phi_n(g),\rho_n(g))=0$$
    for every $g\in G$. By Proposition \ref{prop:Pold}, there exist (uniquely determined) homomorphisms $\psi_n:\Pold_s(G)\to\Sym(m_n)$ with $\rho_n=\psi_n\circ\iota_s$. Take $a\in\Pold_s(G)$ and write it as
$$a=\prod_{i=1}^k\iota_s(g_i)^{\varepsilon_i},$$
where $g_i\in G$. and $\varepsilon_i=\pm 1$. Then
$$\psi_n(a)=\prod_{i=1}^k\psi_n(\iota_s(g_i))^{\varepsilon_i}=\prod_{i=1}^{k}\rho_n(g_i)^{\varepsilon_i}.$$
On the other hand,
$$d_{m_n}\left (f_n(a),\prod_{i=1}^k\phi_n(g_i)^{\varepsilon_i}\right )=d_{m_n}\left (f_n\left (\prod_{i=1}^{k}\iota_s(g_i)^{\varepsilon_i}\right ), \prod_{i=1}^kf_n(\iota_s(g_i))^{\varepsilon_i}\right )$$
converges to 0, hence
\begin{align*}
    d_{m_n}(\psi_n(a),f_n(a)) &\le d_{m_n}\left ( \prod_{i=1}^{k}\rho_n(g_i)^{\varepsilon_i}, \prod_{i=1}^k\phi_n(g_i)^{\varepsilon_i}\right )+
    d_{m_n}\left (f_n(a),\prod_{i=1}^k\phi_n(g_i)^{\varepsilon_i}\right )\\
    &\le \sum_{i=1}^k d_{m_n}(\rho_n(g_i),\phi_n(g_i))+d_{m_n}\left (f_n(a),\prod_{i=1}^k\phi_n(g_i)^{\varepsilon_i}\right )
\end{align*}
converges to 0.

Conversely, assume that $\Pold_s(G)$ is $P$-stable. Let the sequence $f_n:G\to\Sym(m_n)$ be an almost-polynomial of $G$ of degree $\le s$. Let $\widetilde{f}_n:F_G\to\Sym(m_n)$ be the unique homomorphism mapping $x_g$ to $f_n(g)$ for all $g\in G$. By the assumption on $f_n$ we have that
$$\lim_{n\to\infty} d_{m_n}(\widetilde{f}_n((\mypartial_{g_1,\ldots ,g_{s+1}}\iota)(1)),1)=0$$
for all $g_1,\ldots ,g_{s+1}\in G$. By Lemma \ref{lem:Pstabrel}, there exist homomorphisms $\rho_n:\Pold_s(G)\to\Sym(m_n)$ such that
$$\lim_{n\to\infty}d_{m_n}(\widetilde{f}_n(w),\rho_n(wR_s(G)))=0$$
for every $w\in F$. The maps $\phi_n:G\to\Sym(m_n)$, $\phi_n=\rho_n\circ\iota_s$, are unital polynomials of degree $\le s$, and
$$d_{m_n}(f_n(g),\phi_n(g))=d_{m_n}(\widetilde{f}_n(x_g),\rho_n(\iota_s(g)))=
d_{m_n}(\widetilde{f}_n(x_g),\rho_n(x_gR_s(G)))$$
converges to zero for every $g\in G$. This proves the claim.
\end{proof}

\begin{corollary}
    \label{cor:finiteP2}
    Finite groups are $P_2$-stable.
\end{corollary}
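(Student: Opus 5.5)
The corollary follows by combining Theorem \ref{thm:Pscharacterization} with two known facts. Let $G$ be a finite group. By Theorem \ref{thm:Pscharacterization} with $s=2$, it suffices to show that $\Pold_2(G)$ is $P$-stable. By the structure theorem of Jamneshan and Thom \cite{JT24}, recalled in the introduction, $\Pold_2(G)$ is finite whenever $G$ is finite. By the theorem of Glebsky and Rivera \cite{GR09}, every finite group is $P$-stable. Hence $\Pold_2(G)$ is $P$-stable, and so $G$ is $P_2$-stable.

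The argument has two steps.
\begin{enumerate}
\item Verify that $\Pold_2(G)$ is countable, so that $P$-stability in the sense of Subsection \ref{ss:pstab_prelim} applies to it. This is immediate: $F_G$ has finitely many generators $x_g$, $g\in G\setminus\{1\}$, so $\Pold_2(G)$ is finitely generated, and in any case it is finite by \cite{JT24}.
\item Invoke the finiteness of $\Pold_2(G)$. This is the only substantive ingredient, and it is imported from \cite{JT24}. Their explicit description exhibits $\Pold_2(G)$ as an extension built from $G$ and finitely generated abelian data whose torsion is controlled by $G$. Since $G$ is finite, every such piece is finite.
\end{enumerate}

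Within this paper there is therefore no real obstacle; the corollary is a direct application. If one wanted a self-contained check of finiteness, the point needing care would be showing that the abelian kernel of $\Pold_2(G)\to G$ (the map $\psi_{\id_G}$) is finitely generated and torsion. This would use the quadratic relations to show that each generator has order dividing an exponent determined by $|G|$. I would rely on \cite{JT24} for this rather than reprove it.
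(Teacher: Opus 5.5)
Your proposal is correct and is essentially the paper's proof: Theorem \ref{thm:Pscharacterization} reduces $P_2$-stability of $G$ to $P$-stability of $\Pold_2(G)$. That group is finite by \cite[Corollary 4.4]{JT24}, and hence $P$-stable by \cite{GR09}.
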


\begin{proof}
    If $G$ is finite, then $\Pold_2(G)$ is finite by \cite[Corollary 4.4]{JT24}, hence $\Pold_2(G)$ is $P$-stable by \cite{GR09}.
\end{proof}

We record another consequence of a stability criterion of Arzhantseva and
P\u{a}unescu \cite{AP24}. Recall that a subgroup $H$ of a group $\Gamma$ is
\emph{almost normal} if it has only finitely many conjugates, or
equivalently,
$[\Gamma:N_\Gamma(H)]<\infty$.
We also say that $H$ is \emph{profinitely closed} if
$H=\bigcap_{i=1}^{\infty} H_i$
for some finite-index subgroups $H_i\leq \Gamma$.
The following is a special case of the results of
Arzhantseva--P\u{a}unescu on action traces and constraint stability.

\begin{proposition}[\cite{AP24}]
\label{prop:AP-criterion}
Let $\Gamma$ be a countable amenable group.  Suppose that
$\Sub(\Gamma)$ is countable and that every almost normal subgroup of
$\Gamma$ is profinitely closed.  Then $\Gamma$ is $P$-stable.
\end{proposition}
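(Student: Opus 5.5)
This is a special case of the action-trace theory of Arzhantseva--P\u{a}unescu \cite{AP24}, so the plan is to assemble it from their characterization rather than reprove it. Their criterion for an amenable countable group $\Gamma$ reads: $\Gamma$ is $P$-stable if and only if every action trace, or character, of $\Gamma$ arising from a homomorphism $\Gamma\to\mathfrak{S}_\mathcal{U}$ is a pointwise limit of traces of finite actions. Here the trace of an action on a finite set $X$ is $g\mapsto |\Fix_X(g)|/|X|$. In the amenable case one may use the Becker--Lubotzky--Thom reformulation \cite{BLT19}: $\Gamma$ is $P$-stable if and only if every invariant random subgroup of $\Gamma$ is co-sofic, i.e.\ a weak-$*$ limit of IRSs supported on finite-index subgroups. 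I will work with that form.

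\emph{Step 1: reduce to ergodic IRSs supported on finitely many almost normal subgroups.} Since $\Sub(\Gamma)$ is countable, every IRS $\mu$ is purely atomic. Each atom $H$ has a finite orbit under conjugation, because a conjugation-invariant probability measure gives equal positive mass to every point of an orbit. Hence $[\Gamma:N_\Gamma(H)]<\infty$, so $H$ is almost normal. An ergodic IRS is then the uniform measure on a finite conjugacy class $\{H^{\gamma}\}$. Every IRS is a countable convex combination of such measures, and convex combinations with rational weights of co-sofic IRSs are co-sofic, by taking disjoint unions of finite actions. The set of co-sofic IRSs is also weak-$*$ closed, so it suffices to treat the uniform measure on a single finite conjugacy class of an almost normal subgroup $H$.

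\emph{Step 2: approximate using profinite closedness.} Write $H=\bigcap_i H_i$ with $[\Gamma:H_i]<\infty$. Replacing $H_i$ by $H_1\cap\cdots\cap H_i$, we may assume the $H_i$ are decreasing. Consider the finite actions $\Gamma\curvearrowright \Gamma/H_i$. Their IRS is the uniform measure on the conjugates of $H_i$ weighted by the cosets, namely $\gamma\mapsto H_i^{\gamma}$ with $\gamma$ uniform in $\Gamma/H_i$.

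The goal is to show that these converge weak-$*$ to the uniform measure on the conjugacy class of $H$. By the definition of the topology on $\Sub(\Gamma)$, it suffices to check that for each finite $F\subseteq\Gamma$ the probability that $H_i^{\gamma}\cap F=H^{\gamma}\cap F$ tends to $1$. Here I would use the finiteness of the conjugacy class: pick coset representatives $\gamma_1,\ldots,\gamma_k$ of $N=N_\Gamma(H)$. Since $H^{\gamma_j}=\bigcap_i H_i^{\gamma_j}$ is a decreasing intersection, for each $j$ and each $f\in F\setminus H^{\gamma_j}$ we eventually have $f\notin H_i^{\gamma_j}$.

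\emph{Main obstacle.} The difficulty is that $H_i^{\gamma}$ depends on $\gamma$ modulo $H_i$, not modulo $N$. So I would replace $H_i$ by $H_i\cap N$, which is still of finite index and still has intersection $H$ since $H\le N$. Then for $\gamma\in\gamma_j N$ one must compare $H_i^{\gamma}$ with $H^{\gamma}=H^{\gamma_j}$; this comparison needs uniformity over $n\in N$, namely that $H_i^{n}\cap F$ stabilizes uniformly. To obtain this I would further replace $H_i$ by its core $\bigcap_{n\in N}H_i^{n}$ in $N$, which is normal in $N$, of finite index, and still contains $H$ only if $H$ is normal in $N$. It is, since $N$ normalizes $H$, but one must check that the intersection of the cores is still $H$. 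This is the delicate point; if it fails, I would instead cite \cite{AP24}'s general theorem, where exactly this issue is handled.

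Granting it, the limit IRS coincides with the given ergodic IRS, which is therefore co-sofic. Hence by \cite{BLT19} $\Gamma$ is $P$-stable.
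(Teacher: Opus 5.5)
Your argument is correct, and it takes a different route from the paper. The paper gives no subgroup-level argument. It quotes \cite[Proposition 5.17]{AP24}: under exactly these two hypotheses, the action traces of $\Gamma$ are residually finite. It then applies \cite[Corollary 5.19]{AP24}, which uses amenability to upgrade this to constraint stability, and specializes to the trivial constraint subgroup. You instead invoke the Becker--Lubotzky--Thom IRS criterion and verify co-soficity by hand. Countability of $\Sub(\Gamma)$ makes every IRS atomic, with ergodic components uniform on finite conjugacy classes of almost normal subgroups, and profinite closedness supplies the approximating finite actions. In effect you reprove the content of \cite[Proposition 5.17]{AP24} in IRS language. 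Your version shows exactly where each hypothesis enters; the paper's version is a one-line citation inside the more general action-trace and constraint-stability framework. One thing to check: the statement is for countable, possibly infinitely generated, $\Gamma$, so make sure the version of the \cite{BLT19} criterion you quote is stated in that generality.

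The step you flag as delicate is immediate, so no fallback to \cite{AP24} is needed. Let $N=N_\Gamma(H)$, take $H_i\le N$, and set $K_i=\bigcap_{n\in N}nH_in^{-1}$. Since $K_i\subseteq H_i$, you get $\bigcap_iK_i\subseteq\bigcap_iH_i=H$. Since $H\trianglelefteq N$ and $H\le H_i$, you get $H\subseteq K_i$. Also $[\Gamma:K_i]<\infty$ because $[N:H_i]<\infty$. As $K_i\trianglelefteq N$, the stabilizer $\gamma K_i\gamma^{-1}$ of the coset $\gamma K_i$ depends only on $\gamma N$. Hence the IRS of $\Gamma\curvearrowright\Gamma/K_i$ is exactly $\frac1k\sum_{j=1}^k\delta_{\gamma_jK_i\gamma_j^{-1}}$. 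Each $\gamma_jK_i\gamma_j^{-1}$ decreases to $\gamma_jH\gamma_j^{-1}$, so these measures converge to the uniform measure on the conjugacy class of $H$. This finishes Step 2 with no uniformity issue.
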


\begin{proof}
Arzhantseva and P\u{a}unescu prove that, under the assumptions that
$\Sub(\Gamma)$ is countable and every almost normal subgroup is
profinitely closed, the relevant action traces are residually finite,
see \cite[Proposition 5.17]{AP24}.  Combining this with amenability
gives constraint stability in
\cite[Corollary~ 5.19]{AP24}.  Taking the constraint subgroup to be
trivial yields ordinary $P$-stability.
\end{proof}

We obtain the following polynomial consequence.

\begin{corollary}
\label{cor:AP-polynomial}
Let $G$ be a countable group and let $s\geq1$.  Suppose that
$P=\Pold_s(G)$
is amenable, that $\Sub(P)$ is countable, and that every almost normal subgroup of $P$ is profinitely closed.  Then $G$ is $P_s$-stable.
\end{corollary}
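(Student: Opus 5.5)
This follows directly by combining two earlier results. First, apply Proposition \ref{prop:AP-criterion} to $\Gamma=P=\Pold_s(G)$. Then transfer the conclusion to $G$ via Theorem \ref{thm:Pscharacterization}. Before invoking the criterion, I must check that $\Gamma$ meets its standing hypothesis of being a countable amenable group.

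Amenability, countability of $\Sub(P)$, and profinite closedness of almost normal subgroups are all assumed. So the only remaining point is that $P$ itself is countable. This holds because $G$ is countable. The free group $F_G$ on the countable generating set $\{x_g\mid g\in G\setminus\{1\}\}$ is countable, and $\Pold_s(G)=F_G/R_s(G)$ is a quotient of it, hence countable.

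The criterion then gives that $P$ is $P$-stable. The direction ``$\Pold_s(G)$ $P$-stable $\Rightarrow$ $G$ is $P_s$-stable'' of Theorem \ref{thm:Pscharacterization} yields that $G$ is $P_s$-stable.

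There is no genuine obstacle here. The only subtle point is making sure the hypotheses of Proposition \ref{prop:AP-criterion} are about $P$, not $G$, together with the countability observation above. The assumption that $\Sub(P)$ is countable already implicitly forces $P$ to be countable anyway.
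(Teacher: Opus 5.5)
Your proposal is correct and is the intended argument: the paper gives no separate proof, presenting the corollary as an immediate consequence of Proposition \ref{prop:AP-criterion} applied to $\Gamma=\Pold_s(G)$ together with the converse direction of Theorem \ref{thm:Pscharacterization}. Your check that $\Pold_s(G)$ is countable, either as a quotient of the countably generated free group $F_G$ or as a consequence of countability of $\Sub(P)$, is a valid detail the paper leaves implicit.
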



\section{Polynomially sofic approximations}
\label{s:polysoficapprox}

\noindent

\begin{definition}[Polynomially sofic approximation]
    Let $G$ be a countable group.
    The maps $f_n:G\to\Sym(m_n)$ form a \emph{polynomial sofic approximation of $G$ of degree $\le s$} if $(f_n)_n$ is an almost-polynomial of degree $\le s$ of $G$ that is separating, i.e., 
    $$\lim_{n\to\infty} d_{m_n}(f_n(g),id)=1$$
    for all $g\in G\setminus\{1\}$.
\end{definition}

\begin{remark}
    \label{rem:unitalsof}
    Recall that almost-homomorphisms are asymptotically unital by definition. Without assuming this, the notion of polynomial sofic approximations becomes degenerate. Namely, take any sequence of permutations $\sigma_n\in\Sym(m_n)$ with
    $$\lim_{n\to \infty }d_{m_n}(\sigma_n,1) =1$$
    and consider the constant maps
    $f_n(g)=\sigma_n$, $g\in G$. These are genuine polynomials of any degree, and
    $$\lim_{n\to\infty}d_{m_n}(f_n(g),1)=d_{m_n}(\sigma_n,1)=1.$$
    Thus every group would be admitting a polynomial sofic approximation of any degree under that definition.
\end{remark}

\begin{proof}[Proof of Theorem \ref{thm:soficcharacterization}]
    If $G$ is sofic, it admits a polynomial sofic approximation of degree $\le 1$, which is a polynomial sofic approximation of degree $\le s$ for every $s\ge 1$.

    The converse is proved by induction on $s$. Let $s\ge 2$ and let $G$ admit a polynomial sofic approximation $f_n:G\to\Sym(m_n)$ of degree $\le s$. Pick an arbitrary nonprincipal ultrafilter $\mathcal{U}$, and form the metric ultraproduct
    $\mathfrak{S}_{\mathcal{U}}$.
    Then $(f_n)_n$ induces a unital polynomial $f:G\to\mathfrak{S}_{\mathcal{U}}$ of degree $\le s$.
    Lemma \ref{lem:lowerdeg} implies that the map $f\mid_{G'}$ is a polynomial of degree $\le s-1$, hence it induces a polynomial sofic approximation of $G'$ of degree $s-1$. By induction, $G'$ is sofic, therefore $G$ is sofic.
\end{proof}

\begin{corollary}
    \label{cor:soficG}
    Let $G$ be a countable group. If $\Pold_s(G)$ is sofic, then $G$ is sofic.
\end{corollary}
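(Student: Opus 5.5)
The argument transfers a sofic approximation of $\Pold_s(G)$ to $G$ along the universal polynomial map $\iota_s$, and then applies Theorem \ref{thm:soficcharacterization}. Note first that $\Pold_s(G)$ is countable when $G$ is, since it is a quotient of the countably generated free group $F_G$. Fix a sofic approximation $f_n:\Pold_s(G)\to\Sym(m_n)$ of $\Pold_s(G)$. Put
$$\phi_n=f_n\circ\iota_s:G\to\Sym(m_n).$$
The goal is to show that $(\phi_n)_n$ is a polynomial sofic approximation of $G$ of degree $\le s$. Theorem \ref{thm:soficcharacterization} then says that $G$ is sofic.

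\textbf{Almost-polynomiality.} This is exactly the computation from the first half of the proof of Theorem \ref{thm:Pscharacterization}.

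Asymptotic unitality follows because $\iota_s(1)=1$ and almost-homomorphisms are asymptotically unital. For fixed $g_1,\ldots,g_{s+1}\in G$, the element $(\mypartial_{g_1,\ldots,g_{s+1}}\phi_n)(1)$ is a fixed group word, independent of $n$, in the permutations $f_n(\iota_s(h))$ for finitely many $h\in G$. Since $(f_n)_n$ is an almost-homomorphism and the metric is bi-invariant, this word is asymptotically close to
$$f_n\bigl((\mypartial_{g_1,\ldots,g_{s+1}}\iota_s)(1)\bigr)=f_n(1),$$
because $\iota_s$ is a genuine polynomial of degree $\le s$. The right-hand side tends to the identity.

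To make this rigorous, I would bound the discrepancy by a finite sum of terms of the form $d_{m_n}(f_n(ab),f_n(a)f_n(b))$ and $d_{m_n}(f_n(a^{-1}),f_n(a)^{-1})$. The latter is controlled via $d_{m_n}(f_n(a^{-1})f_n(a),f_n(1))$ and $d_{m_n}(f_n(1),1)$.

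\textbf{Separation.} Injectivity of $\iota_s$ is noted after Proposition \ref{prop:Pold}, and $\iota_s(1)=1$. Hence for $g\ne 1$ we have $\iota_s(g)\neq 1$ in $\Pold_s(G)$. Therefore
$$d_{m_n}(\phi_n(g),1)=d_{m_n}\bigl(f_n(\iota_s(g)),1\bigr)\to 1,$$
because $(f_n)_n$ is separating on $\Pold_s(G)$.

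Combining the two steps, $(\phi_n)_n$ is a polynomial sofic approximation of $G$ of degree $\le s$, and Theorem \ref{thm:soficcharacterization} finishes the proof.

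There is no real obstacle here. The only points requiring care are the bookkeeping of the word-level error estimate in the first step and remembering to use injectivity of $\iota_s$ for separation. Alternatively, one can use the ultraproduct formulation: a separating homomorphism $\Theta:\Pold_s(G)\to\mathfrak{S}_{\mathcal U}$ composed with $\iota_s$ gives a genuine unital polynomial $G\to\mathfrak{S}_{\mathcal U}$ with $d_{\mathcal U}(\Theta\iota_s(g),1)=1$ for $g\ne1$. The diagonal extraction described in Subsection \ref{ss:almosthom_prelim} then yields the sequential version.
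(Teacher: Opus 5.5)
Your proof is correct and is essentially the paper's argument: compose a separating (almost-)homomorphism of $\Pold_s(G)$ with the injective unital polynomial map $\iota_s$, use injectivity for separation, and invoke Theorem \ref{thm:soficcharacterization}. The paper works directly in the ultraproduct $\mathfrak{S}_{\mathcal U}$, which is your stated alternative; there $f\circ\iota_s$ is an exact polynomial, so almost-polynomiality is immediate, whereas your sequential version carries out the same check through word-level error bookkeeping.
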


\begin{proof}
    Let $\mathcal{U}$ be a nonprincipal ultrafilter admitting a homomorphism $f:\Pold_s(G)\to\mathfrak{S}_{\mathcal{U}}$ with $d_{\mathcal{U}}(f(p),1)=1$ for every $p\in\Pold_s(G)\setminus\{1\}$. Let $\phi=f\circ\iota_s:G\to\mathfrak{S}_{\mathcal{U}}$. Then $\phi$ is a unital polynomial of degree $\le s$. As $\iota_s$ is injective, we have that $d_{\mathcal{U}}(\phi(g),1)=1$ for every $g\in\setminus\{1\}$. This implies that $\phi$ induces a polynomial sofic approximation of $G$ of degree $\le s$. By Theorem \ref{thm:soficcharacterization}, $G$ is sofic.
\end{proof}

\begin{proposition}
    \label{prop:soficPol2}
    Let $G$ be a sofic group. Then $\Pold_2(G)$ is sofic.
\end{proposition}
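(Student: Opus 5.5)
The plan is to use the structure of $\Pold_2(G)$ described in the introduction: in degree two the universal polynomial group embeds into a wreath product built from $G$. I would then combine this embedding with standard permanence properties of soficity. Those properties are: subgroups of sofic groups are sofic; extensions of sofic groups by amenable quotients are sofic; and, by Hayes--Sale, restricted wreath products $A\wr G$ with $A$ sofic and $G$ sofic are sofic.

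\textbf{Step 1: an embedding into a wreath product.} I would construct an injective homomorphism from $\Pold_2(G)$ into a wreath product of the form $A\wr G$, possibly extended by an abelian factor, with $A$ abelian (or at least amenable). From \cite{JT24}, a unital quadratic map $f$ has the following structure. The maps $\lambda_a(x)=\mypartial_a f(x)\,\mypartial_a f(1)^{-1}$ are homomorphisms, and by \cite[Proposition 2.4]{JT24} their images $\beta_k(G)$ are abelian. So $\lambda_a$ factors through $G^{\mathrm{ab}}$, and $\Pold_2(G)$ should be generated by a copy of $G$ (via $\psi_{\id_G}$) together with an abelian-by-$G$ part on which $G$ acts by permuting the indices $a$.

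Concretely, I would define a quadratic map $\theta\colon G\to (A^{(G)})\rtimes G$ with $A=G^{\mathrm{ab}}\otimes G^{\mathrm{ab}}$, or a similar abelian group extracted from the Jamneshan--Thom presentation. It would have the shape $\theta(g)=(c_g,g)$, where $c_g$ is a cocycle-like function whose second differences are constant. By Proposition~\ref{prop:Pold}, $\theta$ induces a homomorphism $\psi_\theta\colon\Pold_2(G)\to A\wr G$. Injectivity would be checked against the explicit normal form of $\Pold_2(G)$ in \cite{JT24}. I expect this step to be the main obstacle: one must choose $A$ and $c_g$ so that $\theta$ is genuinely of degree $\le 2$ and simultaneously separates all elements of the kernel of $\Pold_2(G)\to G$. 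My first attempt would be to take $A$ to be exactly the abelian kernel described by Jamneshan--Thom and verify injectivity on that kernel, where the map is linear.

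\textbf{Step 2: deducing soficity.} If the exact wreath-product embedding proves too delicate, an extension argument suffices instead. The kernel $M$ of $\psi_{\id_G}\colon\Pold_2(G)\to G$ should be abelian-by-abelian, hence amenable, by the description above. The extension $1\to M\to\Pold_2(G)\to G\to1$ then has amenable kernel and sofic quotient. Such extensions are not known to be sofic in general, which is precisely why the wreath-product embedding is needed: in $A\wr G$ with $A$ amenable and $G$ sofic, soficity holds by Hayes--Sale. So I would finish by noting that $A\wr G$ (or $A\wr G$ extended by an abelian group, using closure under amenable quotients) is sofic, and that $\Pold_2(G)$, as a subgroup, is sofic.

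For countability, note that $\Pold_2(G)$ is countable when $G$ is, so the sofic notions in Subsection~\ref{ss:sofic_prelim} apply directly.
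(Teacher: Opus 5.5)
There is a genuine gap. Your architecture matches the paper's: embed $\Pold_2(G)$ into a restricted wreath product of an abelian group by $G$, then use Hayes--Sale and closure under subgroups. But the embedding is the whole content of the proof, and you leave it unconstructed (you call it ``the main obstacle'').

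Your tentative base group is also wrong. $A=G^{\rm ab}\otimes G^{\rm ab}$ cannot work in general. For the Pr\"ufer group $G=\mathbb{Z}/p^\infty$, which is countable, abelian and sofic, one has $G^{\rm ab}\otimes G^{\rm ab}=0$. Then $A\wr G\cong G$ is abelian, whereas $\Pold_2(G)$ is not: $\iota_2(g)$ and $\iota_2(h)$ do not commute for distinct nontrivial $g,h$. The degree-two defect is $(g-1)\otimes\bar h$. It involves a single factor of $G^{\rm ab}$, with $g$ entering only as an index, so the correct base is $G^{\rm ab}$ itself. Note also that the kernel of $\Pold_2(G)\to G$ is abelian, not just abelian-by-abelian. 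As you observe yourself, Step~2 gives nothing on its own.

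The missing idea is that the Jamneshan--Thom description untwists in a larger module. By \cite[Theorem 1.3]{JT24}, $\Pold_2(G)\cong A\rtimes_\psi G$ with $A=\omega(G)\otimes_{\mathbb{Z}}G^{\rm ab}$ and $\psi(g,h)=(g-1)\otimes\bar h$.
\begin{itemize}
\item This cocycle need not be a coboundary in $A$; for instance $\Pold_2(C_2)\cong C_4$ does not split over $A\cong C_2$.
\item It is a coboundary in $M=\mathbb{Z}G\otimes G^{\rm ab}\cong\bigoplus_{g\in G}G^{\rm ab}$. With $c(g)=1\otimes\bar g$ one has $g\,c(h)-c(gh)+c(g)=(g-1)\otimes\bar h$.
\item Hence $\Phi(\xi,g)=(\xi+1\otimes\bar g,g)$ is a homomorphism $A\rtimes_\psi G\to M\rtimes G\cong G^{\rm ab}\wr G$.
\item $\Phi$ is injective because $\mathbb{Z}G=\omega(G)\oplus\mathbb{Z}\cdot 1$ as abelian groups, so $A\to M$ stays injective after tensoring with $G^{\rm ab}$.
\end{itemize}
In your language, $c_g$ is the function with value $\bar g$ at $1$ and $0$ elsewhere, and $\theta(g)=(c_g,g)$. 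A direct computation gives $\partial_h\theta(x)\,\partial_h\theta(1)^{-1}=\bigl((1-h)\otimes\bar x,1\bigr)$, which is a homomorphism in $x$. So $\theta$ is quadratic and the homomorphism it induces on $\Pold_2(G)$ is $\Phi$.

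With this in place, your final step (Hayes--Sale for $G^{\rm ab}\wr G$, then passing to the subgroup) finishes the proof exactly as in the paper.
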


\begin{proof}
    It follows from \cite[Theorem 1.3]{JT24} that $\Pold_2(G)$ is isomorphic to the twisted cross product $A\rtimes_\psi G$, where $A=\omega(G)\otimes_\mathbb{Z} G^{\rm ab}$. Here $\omega(G)$ is the augmentation ideal of $\mathbb{Z}G$. Note that the embedding of $\omega(G)$ into $\mathbb{Z}G$ is split, hence tensoring with $G^{\rm ab}$ keeps it injective.
    The 2-cocycle $\psi:G\times G\to A$ is given by $\psi(g,h)=(g-1)\otimes \bar{h}$, where $\bar{h}=hG'$.

    Consider $M=\mathbb{Z}G\otimes_\mathbb{Z} G^{\rm ab}$. As a $G$-module,
    $$M\cong \bigoplus_{g\in G} G^{\rm ab},$$
    with $G$ acting by left translation. Thus $M\rtimes G\cong G^{\rm ab}\wr G$. Define $\Phi:A\rtimes_\psi G\to M\rtimes G$ by
    $$\Phi(\xi,g)=(\xi+1\otimes \bar{g},g)$$
    for all $\xi\in A, g\in G$. As
    \begin{align*}
        \Phi((\xi_1,g_1),(\xi_2,g_2)) &= 
        \Phi(\xi_1+g_1\xi_2+\psi(g_1,g_2),g_1g_2)\\
        &= (\xi_1+g_1\xi_2+\psi(g_1,g_2)+1\otimes (\bar{g}_1+\bar{g}_2),g_1g_2)\\
        &= (\xi_1+g_1\xi_2+g_1\otimes \bar{g}_2+1\otimes \bar{g}_1,g_1g_2)\\
        &=\Phi(\xi_1,g_1)\Phi(\xi_2,g_2),
    \end{align*}
    it follows that $\Phi$ is an injective homomorphism, hence $\Pold_2(G)$ embeds in $G^{\rm ab}\wr G$. By \cite[Theorem 1.1]{HS18}, $\Pold_2(G)$ is sofic.
\end{proof}

\begin{example}[Polynomial sofic approximation of degree 2 for $\mathbb{Z}$]
    \label{ex:polysofZ}
    Let $p_n$ be a sequence of strictly increasing odd primes. Let $X_n=C_{p_n}\times C_{p_n}$ and $m_n=|X_n|=p_n^2$. For $x,y\in X_n$ let $a_n(x,y)=(x+1,y)$ and $b_n(x,y)=(x,y+1)$. The map $q_n:\mathbb{Z}\to\Sym(X_n)$ given by 
    $$q_n(k)=a_n^kb_n^{{k\choose 2}}$$
    is a unital quadratic map. Let $\tau_n\in\Sym(X_n)$ be the transposition interchanging $(0,0)$ and $(0,1)$. Define $f_n:\mathbb{Z}\to\Sym(X_n)$ by the rule
    $$f_n(k)=\left\{ \begin{array}{ccc}
        q_n(k) & : & k\neq 2\\
        \tau_n\cdot q_n(2) & : & k=2
    \end{array}
    \right . .$$
    For a fixed $k\in\mathbb{Z}$ we have that
    $$d_{m_n}(f_n(k),q_n(k))\le \frac{2}{p_n^2}\rightarrow 0.$$
    As the Hamming distance is bi-invariant, we therefore get
    $$d_{m_n}((\mypartial_{k_1,k_2,k_3}f_n)(1), 1)=
    d_{m_n}((\mypartial_{k_1,k_2,k_3}f_n)(1),\mypartial_{k_1,k_2,k_3}q_n)(1))\rightarrow 0$$
    for all $k_1,k_2,k_3\in\mathbb{Z}$. This shows that $(f_n)_n$ is an almost-polynomial of $\mathbb{Z}$ of degree exactly 2. Note that $f_n$ are not quadratic maps as $(\mypartial_{1,1,1}f_n)(0)\neq 1$. Fix a non-zero integer $k$. For sufficiently large $n$ we have that $p_n>|k|$, so $k\not\equiv 0\mod p_n$, therefore $q_n(k)$ is a nontrivial translation on $X_n$. Therefore we have that $d_{m_n}(q_n(k),1)=1$ for all $n$ large enough. In this case,
    $$d_{m_n}(f_n(k),1)\ge d_{m_n}(q_n(k),1)-d_{m_n}(f_n(k),q_n(k))\ge 1-\frac{2}{p_n^2}\rightarrow 1.$$
    This shows that $(f_n)$ is a sofic approximation of degree $2$ for $\mathbb{Z}$.
\end{example}


\section{Polynomial weak permutation stability of finite groups}
\label{s:weakpolypermut}

\noindent
We next introduce a weak version of polynomial permutation stability,
following the notion of weak stability of Arzhantseva and
P\u{a}unescu \cite[Section7]{AP15}. We adopt the notations of Subsection \ref{ss:universal_prelim}.

Throughout this section, let $G$ be a finite group.  
Given a map
$f:G\longrightarrow\Sym(m)$,
we denote by
$\widehat f:F_G\longrightarrow\Sym(m)$
the unique homomorphism satisfying
$\widehat f(x_g)=f(g)$ for all $g\in G$. 
Thus $f$ is an exact unital polynomial map of degree at most $s\ge 1$ if and only if
$R_s(G)\leq\ker\widehat f$.

Let $\ell$ denote the word length on $F_G$ with respect to the
generating set $\{x_g:g\in G\}$.
Let $\delta>0$.  A map
$f:G\longrightarrow\Sym(m)$
is called a \emph{$\delta$-strong polynomial approximation of degree
at most $s$} if, for every $w\in F_G$ with
$\ell(w)<1/\delta$,
one has
$$w\in R_s(G)
\quad\Longrightarrow\quad
d_m\bigl(\widehat f(w),1\bigr)<\delta,$$
and
$$w\notin R_s(G)
\quad\Longrightarrow\quad
d_m\bigl(\widehat f(w),1\bigr)>1-\delta.$$
This is precisely the notion of a $\delta$-strong solution introduced
in \cite[Definition~7.1]{AP15}, applied to the canonical presentation
$\Pold_s(G)=F_G/R_s(G)$.

\begin{definition}
\label{def:weak-Ps-stability}
We say that $G$ is \emph{weakly $P_s$-stable} if, for every
$\varepsilon>0$, there exists $\delta>0$ such that every
$\delta$-strong polynomial approximation$
f:G\longrightarrow\Sym(m)$
of degree at most $s$ is $\varepsilon$-close to an exact unital
polynomial map
$q:G\longrightarrow\Sym(m)$
of degree $\le s$,
that is,
$$\max_{g\in G}d_m\bigl(f(g),q(g)\bigr)<\varepsilon.$$
\end{definition}

The terminology is justified by the following immediate consequence
of the universal property.

\begin{proposition}
\label{prop:weak-Ps-universal}
Let $G$ be finite and $s\geq1$.  Then
$G$ weakly $P_s$-stable if and only if
$\Pold_s(G)$ is weakly stable in permutations.
\end{proposition}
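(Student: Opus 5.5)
The plan is to unwind both definitions until they say the same thing about the same data. The definition of weak stability in \cite[Section 7]{AP15} is formulated for a group given by a presentation $F/N$ with a fixed finite generating set. Here the presentation is the canonical one, $\Pold_s(G)=F_G/R_s(G)$, and since $G$ is finite, $F_G$ is finitely generated by $\{x_g: g\in G\setminus\{1\}\}$.

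By \cite[Definition~7.1]{AP15}, $\Pold_s(G)$ is weakly stable in permutations if for every $\varepsilon>0$ there is $\delta>0$ such that the following holds. Whenever a tuple $(\sigma_g)_{g\neq1}$ of permutations in $\Sym(m)$ is a $\delta$-strong solution of the presentation, there is a tuple $(\tau_g)_{g\neq 1}$ that is an exact solution, i.e.\ satisfies every relator in $R_s(G)$, with $\max_g d_m(\sigma_g,\tau_g)<\varepsilon$.

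The first step is the dictionary between the two sides. Tuples $(\sigma_g)_{g\ne1}$ in $\Sym(m)$ correspond bijectively to unital maps $f:G\to\Sym(m)$ via $f(g)=\sigma_g$ and $f(1)=1$. Under this correspondence the homomorphism $F_G\to\Sym(m)$ determined by the tuple is exactly $\widehat f$, because $x_1=1$ by convention. Two consequences follow.

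\begin{itemize}
\item $f$ is a $\delta$-strong polynomial approximation of degree $\le s$ if and only if the tuple is a $\delta$-strong solution of the canonical presentation. This is a literal rewriting, since both conditions are stated in terms of $\widehat f(w)$ for words with $\ell(w)<1/\delta$.
\item The tuple is an exact solution if and only if $R_s(G)\le\ker\widehat f$. This holds if and only if $f$ is an exact unital polynomial of degree $\le s$, using the observation stated before Definition~\ref{def:weak-Ps-stability}, i.e.\ Proposition~\ref{prop:Pold}. The closeness conditions also agree, since the $g=1$ coordinate contributes $d_m(1,1)=0$.
\end{itemize}

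With this dictionary both implications follow with the same $\delta$ for each $\varepsilon$. Suppose $\Pold_s(G)$ is weakly stable, and let $f$ be a $\delta$-strong polynomial approximation. The tuple of $f$ is a $\delta$-strong solution, so it is $\varepsilon$-close to an exact solution, which is an exact unital polynomial $q$ of degree $\le s$. The converse runs through the same correspondence in reverse.

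The main obstacle is unitality. The maps in the definition of weak $P_s$-stability are not assumed to be unital, whereas tuples only see $g\ne1$. This is handled by showing that the $\delta$-strong condition forces $f(1)$ to be close to $1$. The word $x_1^{-1}\cdot 1$ is trivial, and more to the point $\widehat f$ is defined on $F_G$ where $x_1=1$, so the actual value $f(1)$ never enters $\widehat f$. To bridge this I would replace $f$ by its unital modification $f'$, with $f'(1)=1$ and $f'=f$ elsewhere. Then $\widehat{f'}=\widehat f$, so $f'$ is also a $\delta$-strong approximation.

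It remains to control $d_m(f(1),1)$. If the convention makes $x_1$ a genuine generator, the word $x_1$ itself lies in $R_s(G)$ (take all $g_i=1$ in the defining relators), which gives $d_m(f(1),1)<\delta$. Otherwise I would restrict attention to unital $f$, as the paper implicitly does through $\widehat f$. In either case, taking $\delta\le\varepsilon/2$ absorbs the discrepancy via the triangle inequality, which completes the equivalence.

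Finally, I should check that the precise quantifier form of \cite{AP15}, including whether the length bound is $<1/\delta$ or similar, matches Definition~\ref{def:weak-Ps-stability}. If it does not, I would adjust $\delta$ by monotonicity, since being $\delta$-strong implies being $\delta'$-strong for every $\delta'\ge\delta$.
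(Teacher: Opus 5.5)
Your proof is correct and takes essentially the paper's route. The paper gives no proof beyond calling the statement an immediate consequence of the universal property, and your dictionary between unital maps $f$ and tuples $(f(g))_{g\neq1}$ is exactly that unwinding: it matches $\delta$-strong polynomial approximations, exact unital polynomials of degree $\le s$ and $\varepsilon$-closeness with $\delta$-strong solutions, exact solutions and $\varepsilon$-closeness for the finite presentation $F_G/R_s(G)$.

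One correction to your unitality aside: even if $x_1$ were a genuine generator, it would not lie in the relator subgroup. Taking all $g_i=1$ gives the trivial word, since $\mypartial_1$ kills every map, and constant maps $f\equiv c\neq 1$ are polynomial of every degree (cf.\ Remark~\ref{rem:unitalsof}), so they kill every relator while sending $x_1$ to $c$. Hence the $\delta$-strong condition cannot force $f(1)$ to be close to $1$. Unitality must instead come from the definition of $\widehat f$, which forces $f(1)=\widehat f(x_1)=1$. This is necessary rather than merely convenient: without it the equivalence already fails for trivial $G$, taking $f(1)$ fixed-point-free. So your second branch is the one that actually carries the proof.
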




For our use it is convenient to record the corresponding sequential
formulation.

\begin{definition}
A sequence of maps
$f_n:G\longrightarrow\Sym(m_n)$
is called a \emph{strong polynomial approximation of degree at most
$s$} if, for every $w\in F_G$,
\[
w\in R_s(G)
\quad\Longrightarrow\quad
d_{m_n}\bigl(\widehat f_n(w),1\bigr)\longrightarrow0,
\]
whereas
\[
w\notin R_s(G)
\quad\Longrightarrow\quad
d_{m_n}\bigl(\widehat f_n(w),1\bigr)\longrightarrow1.
\]
\end{definition}

Thus the first condition says that all relations of $\Pold_s(G)$ are
satisfied asymptotically, while the second says that all non-relations
are asymptotically maximally separated from the identity.

\begin{proposition}
\label{prop:weak-Ps-sequential}
The group $G$ is weakly $P_s$-stable if and only if, for every strong
polynomial approximation
$f_n:G\longrightarrow\Sym(m_n)$
of degree $\le s$, there exist exact unital polynomial maps
$q_n:G\longrightarrow\Sym(m_n)$
of degree $\le s$
such that
$$
\max_{g\in G}
d_{m_n}\bigl(f_n(g),q_n(g)\bigr)
\longrightarrow0.
$$
\end{proposition}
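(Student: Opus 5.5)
The plan is the standard passage between the $\varepsilon$--$\delta$ formulation and the sequential one, by contradiction in one direction and by a diagonal choice of parameters in the other. Since $G$ is finite, $F_G$ is finitely generated. Hence for each $\delta>0$ the set of words of length $<1/\delta$ is finite, and the maximum over $g\in G$ is a maximum over a finite set. These two finiteness facts are what make the passage work.

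\emph{Weak $P_s$-stability implies the sequential property.} Let $(f_n)_n$ be a strong polynomial approximation of degree $\le s$. First I show that for every $\delta>0$, all but finitely many $f_n$ are $\delta$-strong polynomial approximations. This holds because only finitely many words $w$ with $\ell(w)<1/\delta$ must be checked. For each such $w$, either $w\in R_s(G)$, and then $d_{m_n}(\widehat f_n(w),1)\to0$, or $w\notin R_s(G)$, and then $d_{m_n}(\widehat f_n(w),1)\to1$. So eventually every required inequality holds.

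Next, given $\varepsilon=1/k$, take the corresponding $\delta_k$ from Definition \ref{def:weak-Ps-stability}, and let $N_k$ (increasing in $k$) be such that $f_n$ is $\delta_k$-strong for all $n\ge N_k$. For $N_k\le n<N_{k+1}$, choose an exact unital polynomial $q_n$ of degree $\le s$ with $\max_g d_{m_n}(f_n(g),q_n(g))<1/k$. For $n<N_1$, take $q_n\equiv 1$, which is a unital polynomial of every degree. Then the maximum tends to $0$.

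\emph{The sequential property implies weak $P_s$-stability.} Suppose weak $P_s$-stability fails. Then there is $\varepsilon>0$ such that, for each $n$, there is a $(1/n)$-strong polynomial approximation $f_n:G\to\Sym(m_n)$ that is not $\varepsilon$-close to any exact unital polynomial of degree $\le s$. The sequence $(f_n)_n$ is a strong polynomial approximation. Indeed, for fixed $w$ and all $n>\ell(w)$, the $(1/n)$-strong condition gives $d_{m_n}(\widehat f_n(w),1)<1/n$ if $w\in R_s(G)$, and $d_{m_n}(\widehat f_n(w),1)>1-1/n$ otherwise. By hypothesis there are $q_n$ with $\max_g d_{m_n}(f_n(g),q_n(g))\to0$, so the maximum is $<\varepsilon$ for large $n$, a contradiction.

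The only point needing care is the first direction, where the $\delta_k$ must be converted into thresholds $N_k$ uniformly over the finite set of short words and then diagonalized. Beyond that the argument is routine. It does not even use Proposition \ref{prop:weak-Ps-universal}, though it parallels the sequential characterization of weak stability in \cite{AP15}.
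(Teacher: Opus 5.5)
Your proof is correct and follows the paper's argument in both directions. In one direction, finiteness of $\{w:\ell(w)<1/\delta\}$ makes a strong approximation eventually $\delta$-strong; in the other, a sequence of $\delta_n$-strong counterexamples with $\delta_n\to0$ is itself a strong approximation. Your block-wise choice of $q_n$ (with $N_k$ taken strictly increasing so the blocks $[N_k,N_{k+1})$ exhaust $\mathbb{N}$) simply spells out the diagonalization the paper leaves implicit.
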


\begin{proof}
Suppose first that $G$ is weakly $P_s$-stable.  If $(f_n)$ is a
strong polynomial approximation, then for every fixed $\delta>0$ the
set
\[
\{w\in F_G:\ell(w)<1/\delta\}
\]
is finite.  Hence, for all sufficiently large $n$, the map $f_n$ is a
$\delta$-strong polynomial approximation.  Applying weak
$P_s$-stability with $\delta$ corresponding to a sequence
$\varepsilon\to0$ gives the required polynomial maps $q_n$.

Conversely, if weak $P_s$-stability fails, there exist
$\varepsilon>0$, a sequence $\delta_n\to0$, and
$\delta_n$-strong polynomial approximations
\[
f_n:G\to\Sym(m_n)
\]
which remain at distance at least $\varepsilon$ from every exact
degree-$s$ polynomial map.  For each fixed $w\in F_G$, eventually
\[
\ell(w)<1/\delta_n,
\]
and therefore the defining strong-solution inequalities show that
$(f_n)$ is a strong polynomial approximation in the sequential
sense.  This contradicts the assumed sequential correction property.
\end{proof}
Arzhantseva and P\u{a}unescu prove that a finitely presented group
$\Gamma$ is weakly stable if and only if every sofic representation of
$\Gamma$ into a metric ultraproduct of symmetric groups is perfect;
see \cite[Definition 4.1 and Theorem 7.2(i)]{AP15}.
Applied to the universal polynomial group, their result gives the
following formulation:

\begin{corollary}
\label{cor:weak-Ps-sofic-representations}
Let $G$ be finite and $s\geq1$.  Then $G$ is weakly $P_s$-stable if
and only if every sofic representation
$$\Theta:
\Pold_s(G)\longrightarrow
\prod_{n\to\mathcal U}
\bigl(\Sym(m_n),d_{m_n}\bigr)$$
is perfect.
\end{corollary}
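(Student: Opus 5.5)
The plan is to reduce the statement to \cite[Theorem~7.2(i)]{AP15}, applied to the canonical presentation $\Pold_s(G)=F_G/R_s(G)$, using Proposition~\ref{prop:weak-Ps-universal} as the bridge. By that proposition, weak $P_s$-stability of $G$ is equivalent to weak permutation stability of $\Pold_s(G)$. The Arzhantseva--P\u{a}unescu criterion then says that this holds if and only if every sofic representation of $\Pold_s(G)$ into a metric ultraproduct of symmetric groups is perfect. Composing the two equivalences gives the corollary.

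The only hypothesis that needs checking is that $\Pold_s(G)$ is finitely presented, since \cite[Theorem 7.2(i)]{AP15} is stated for finitely presented groups. Because $G$ is finite, $F_G$ is free on the finite set $\{x_g\mid g\in G\setminus\{1\}\}$. The defining relators $(\mypartial_{g_1,\ldots,g_{s+1}}\iota)(1)$ are indexed by $G^{s+1}$, so there are at most $|G|^{s+1}$ of them. Hence $F_G/R_s(G)$ is a finite presentation of $\Pold_s(G)$.

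I would also check that the notions match. The $\delta$-strong polynomial approximations of degree $\le s$ are, by construction, exactly the $\delta$-strong solutions of \cite[Definition 7.1]{AP15} for this presentation. Closeness to an exact polynomial map $q$ on the generators $x_g$ corresponds, via the unique homomorphism $\psi_q$ of Proposition~\ref{prop:Pold}, to closeness to a homomorphism $\Pold_s(G)\to\Sym(m)$. This identification is what underlies Proposition~\ref{prop:weak-Ps-universal}. Alternatively, one can pass through the sequential form in Proposition~\ref{prop:weak-Ps-sequential}, using the diagonalization remarks of Subsection~\ref{ss:almosthom_prelim}. These show that sofic representations of the countable group $\Pold_s(G)$ correspond, after passing to subsequences, to strong approximations of the presentation.

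I expect no real obstacle here. The one delicate point is bookkeeping: the AP15 theorem is stated for a fixed finite presentation, while the notion of weak stability should not depend on the presentation chosen. Since we use the same canonical presentation on both sides, this issue does not arise.
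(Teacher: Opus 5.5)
Your proposal is correct and is essentially the paper's argument: the corollary follows by combining Proposition~\ref{prop:weak-Ps-universal} with Arzhantseva--P\u{a}unescu's Theorem~7.2(i), applied to the canonical presentation $\Pold_s(G)=F_G/R_s(G)$. Your explicit check that this presentation is finite (finitely many generators $x_g$, $g\neq 1$, and at most $|G|^{s+1}$ relators) supplies the finite-presentability hypothesis that the paper uses only tacitly.
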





The preceding formulation becomes particularly simple when the
universal polynomial group is amenable.

\begin{corollary}
\label{cor:weak-Ps-amenable}
Let $G$ be finite and $s\geq1$, and suppose that $\Pold_s(G)$ is
amenable.  Then
$G$ is weakly $P_s$-stable if and only if
$\Pold_s(G)$ is residually finite.
\end{corollary}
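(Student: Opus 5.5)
The natural route is to combine Proposition \ref{prop:weak-Ps-universal} with the Arzhantseva--P\u{a}unescu theory of weak stability, specialised to amenable groups. By Proposition \ref{prop:weak-Ps-universal}, weak $P_s$-stability of $G$ is the same as weak permutation stability of $\Pold_s(G)$. Since $G$ is finite, $F_G$ has finite rank, so $\Pold_s(G)$ is finitely generated; moreover $R_s(G)$ is the normal closure of the finitely many words $(\mypartial_{g_1,\ldots,g_{s+1}}\iota)(1)$, so $\Pold_s(G)$ is finitely presented. Therefore Corollary \ref{cor:weak-Ps-sofic-representations} applies, and the statement reduces to the following claim for the finitely presented amenable group $\Gamma=\Pold_s(G)$: every sofic representation of $\Gamma$ into a metric ultraproduct of symmetric groups is perfect if and only if $\Gamma$ is residually finite.

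For the direction ``residually finite $\Rightarrow$ weakly stable'', I would use amenability through the Elek--Szab\'o uniqueness theorem. For an amenable group, any two sofic representations into the same ultraproduct $\prod_{\mathcal U}\Sym(m_n)$ are conjugate. Residual finiteness supplies a sofic representation that is perfect, i.e.\ one coming from a sequence of genuine actions on finite quotients. The construction is as follows. Take a chain of finite-index normal subgroups $N_k$ with trivial intersection. Choose the multiplicities so that the number of points is $m_n$ up to a negligible error, and let $\Gamma$ act on disjoint unions of copies of $\Gamma/N_k$. Pad with a small fixed block to hit $m_n$ exactly. This gives genuine homomorphisms $\Gamma\to\Sym(m_n)$ that are asymptotically free. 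Any other sofic representation is conjugate to this one in the ultraproduct. Conjugating by a lift of the conjugating element keeps the representatives genuine homomorphisms, so every sofic representation is perfect.

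For the converse, assume every sofic representation of $\Gamma$ is perfect. An amenable group is sofic, so $\Gamma$ has at least one sofic representation. By perfectness it lifts to genuine homomorphisms $\rho_n:\Gamma\to\Sym(m_n)$ that are separating along $\mathcal U$. For $\gamma\neq1$ we then have $\rho_n(\gamma)\neq1$ for $\mathcal U$-many $n$, so $\gamma$ survives in the finite quotient $\rho_n(\Gamma)$. Hence $\Gamma$ is residually finite. This direction uses only soficity and needs no finite presentation.

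The main obstacle is the first direction: checking that the definition of ``perfect'' in \cite[Definition 4.1]{AP15} matches exactly the liftability obtained by conjugating a residually finite model, including the padding needed to match arbitrary $m_n$. I would handle this by quoting the result in \cite{AP15} for amenable groups, namely that weak stability is equivalent to residual finiteness for finitely presented amenable groups. I would fall back on the Elek--Szab\'o argument above only if that citation does not cover the case directly.
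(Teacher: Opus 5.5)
Your proposal is correct and follows the paper's proof. The paper also reduces, via Proposition \ref{prop:weak-Ps-universal}, to weak stability of $\Pold_s(G)$ and then cites \cite[Theorem~7.2(iii)]{AP15} for amenable groups. Your check that $\Pold_s(G)$ is finitely presented, and your Elek--Szab\'o sketch (conjugacy of sofic representations of amenable groups, plus a perfect model built from finite quotients), just unpack what that citation contains.
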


\begin{proof}
By Proposition~\ref{prop:weak-Ps-universal}, $G$ is weakly
$P_s$-stable if and only if $\Pold_s(G)$ is weakly stable.
Arzhantseva and P\u{a}unescu prove that an amenable group is weakly
stable if and only if it is residually finite; see
\cite[Theorem~7.2(iii)]{AP15}.
\end{proof}
More generally, without the amenability assumption, one obtains from
\cite[Theorem~7.2(ii)]{AP15} that if
$\Pold_s(G)$ is sofic and $G$ is weakly $P_s$-stable, then
$\Pold_s(G)$ is residually finite.


\section{$\Pold_s(G)$ and largeness}
\label{s:poldfinite}

\noindent
We assume throughout this section that $G$ is finite group. We first record some basic observations. Let $\alpha:G\to Q$ be a homomorphism and  $\iota_s^Q:Q\to\Pold_s(Q)$  and $\iota_s^G:G\to\Pold_s(G)$ the universal polynomials of degree $\le s$ on $Q$ and $G$, respectively. Then the universal property implies that there exists a uniquely determined homomorphism $\alpha_s^*:\Pold_s(G)\to\Pold_s(Q)$ with $\alpha_s^*\circ \iota_s^G=\iota_s\circ \alpha$. This is the functoriality mentioned in \cite{JT24}. Furthermore, if $\alpha$ is surjective, then the image of $\alpha_s^*$ contains $\iota_s^Q(Q)$, which generates $\Pold_s(Q)$, hence $\alpha_s^*$ is also surjective.

\begin{proposition}
    \label{prop:Gabdivisibleby3}
    If $|G^{\rm ab}|$ is divisible by $3$, then $\Pold_s(G)$ is non-amenable for every $s\ge 3$.
\end{proposition}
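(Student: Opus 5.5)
Since $G$ is finite and $3$ divides $|G^{\rm ab}|$, there is a surjective homomorphism $\alpha\colon G\to C_3$, obtained by composing $G\to G^{\rm ab}$ with a surjection of the finite abelian group $G^{\rm ab}$ onto $C_3$. By the functoriality recorded above, $\alpha$ induces a homomorphism $\alpha_s^*\colon\Pold_s(G)\to\Pold_s(C_3)$. Since $\alpha$ is surjective, so is $\alpha_s^*$. Amenability passes to quotients, so it suffices to show that $\Pold_s(C_3)$ is non-amenable for every $s\ge 3$.

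To go from degree $3$ to degree $s\ge3$, I would show that $\Pold_3(C_3)$ is a quotient of $\Pold_s(C_3)$. The universal map $\iota_3\colon C_3\to\Pold_3(C_3)$ is a unital polynomial of degree $\le 3$. Hence it is also of degree $\le s$, since vanishing of all $(s')$-fold differences implies vanishing of all $(s'+1)$-fold differences: $(\mypartial_{g_1}\phi)(1)=\phi(g_1)\phi(1)^{-1}$ with $\phi$ an $s$-fold difference of $f$. This is the exact version of Proposition \ref{prop:degreeup}. Equivalently, $R_s(C_3)\le R_3(C_3)$ as normal subgroups of $F_{C_3}$.

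By Proposition \ref{prop:Pold}, there is then a homomorphism $\Pold_s(C_3)\to\Pold_3(C_3)$ extending $\iota_3$. It is surjective because its image contains the generating set $\iota_3(C_3)$. Alternatively, the identity on the generators $x_g$ of $F_{C_3}$ induces this surjection directly from $R_s(C_3)\le R_3(C_3)$.

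Combining the two steps gives a surjection $\Pold_s(G)\to\Pold_3(C_3)$. By the theorem of Alekseev--Thom \cite{AT26}, $\Pold_3(C_3)$ is non-amenable (indeed far from amenable). Since quotients of amenable groups are amenable, $\Pold_s(G)$ cannot be amenable.

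The only substantive input is the non-amenability of $\Pold_3(C_3)$ from \cite{AT26}, which I take as given. The one point to check carefully is the inclusion $R_s\le R_3$, i.e. that exact polynomials of degree $\le 3$ are polynomials of degree $\le s$. This follows by the induction above using the reduction to evaluation at $1$ from Subsection \ref{ss:poly_prelim}.
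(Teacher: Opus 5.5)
Your proof is correct and matches the paper's argument: a surjection $G\to C_3$ gives epimorphisms $\Pold_s(G)\twoheadrightarrow\Pold_s(C_3)\twoheadrightarrow\Pold_3(C_3)$, and the non-amenability of $\Pold_3(C_3)$ from \cite{AT26} passes up to $\Pold_s(G)$, since quotients of amenable groups are amenable. You also justify the second epimorphism via $R_s(C_3)\le R_3(C_3)$, a step the paper uses without comment.
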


\begin{proof}
    By the assumption, $G$ has a quotient isomorphic to $C_3$. Note that $\Pold_3(C_3)$ is non-amenable. For every $s\ge 3$, we have an epimorphism
    $$\Pold_s(G)\twoheadrightarrow \Pold_s(C_3)\twoheadrightarrow \Pold_3(C_3),$$
    hence $\Pold_s(G)$ is non-amenable.
\end{proof}

A group $G$ is called \emph{large} if there exists a finite-index
subgroup $H\leq G$ and an epimorphism
$H\twoheadrightarrow F_2$
where $F_2$ denotes the free group of rank two. Note that large groups are clearly non-amenable.

\begin{proposition}
    \label{prop:c2perfect}
    There exist finite groups $G$ with $G'$ perfect, $G^{\rm ab}\cong C_2$, such that $\Pold_s(G)$ are large for all sufficiently large $s$.
\end{proposition}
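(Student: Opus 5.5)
Following the outline in the introduction, I take $G=N\rtimes_\alpha C_2$ with $N$ a nontrivial finite perfect group and $\alpha\in\Aut(N)$ an involution; the simplest choice is $\alpha=1$, e.g.\ $G=A_5\times C_2$. Then $G'=N$ is perfect and $G^{\rm ab}\cong C_2$. The plan is to obtain an explicit relative presentation of $\Pold_s(G)$, locate a finite-index subgroup that is a quotient of a free group by high powers, and apply Lackenby's theorem \cite{Lac07}, using a $2$-deficiency estimate as in \cite{BT11} for an explicit bound on $s$.

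\textbf{Step 1: the restriction to $N$.} By Lemma~\ref{lem:lowerdeg}, the universal map $\iota_s$ restricted to the perfect subgroup $N$ has degree $\le s-1$. Iterating, and using that the abelian quotients of $N$ produced by the lemma are trivial, $\iota_s|_N$ has degree $\le 1$ and is unital, hence a homomorphism. So $\Pold_s(G)$ is generated by a copy of $N$ (a quotient of $N$, in fact $N$ itself since $\iota_s$ is injective) together with $t=\iota_s(c)$, where $c$ generates the $C_2$ factor. Writing $\iota_s(nc^\varepsilon)$ in terms of $\iota_s(n)$ and $t$ via the difference relations, I expect to show that $\Pold_s(G)$ is a quotient of $N*\langle t\rangle$ by relations of the form $t\,n\,t^{-1}=\alpha(n)$ modulo a controllable correction, plus relations involving only $t$. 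Concretely, the target relative presentation is
\[
\Pold_s(G)\cong \bigl(N*\langle t\rangle\bigr)\big/\bigl\langle\!\bigl\langle\, t^{2}\text{-type relators from }\Pold_s(C_2),\ \text{twisting relators between } t \text{ and } N\,\bigr\rangle\!\bigr\rangle .
\]
The factor $\Pold_s(C_2)$ appears as a quotient via functoriality, since $G\twoheadrightarrow C_2$. The polynomial relators in the single variable $t$, arising from $\mypartial_{c,\dots,c}\iota_s(1)=1$, should reduce to $t^{2^{k}}=1$ with $k$ growing roughly like $\log_2 s$.

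\textbf{Step 2: a finite-index free subgroup.} Consider the group $\Gamma$ given by the presentation with the $t$-power relator removed. I plan to exhibit a finite-index subgroup $\Gamma_0\le\Gamma$ that is free of rank $\ge 2$. A natural candidate is the kernel of the map to $N\times\langle t\rangle/\langle t^{m}\rangle$, or of the map to a finite quotient that kills $N$. By Kurosh-type counting of $N*\mathbb{Z}$-type structure, such a kernel is free of large rank. In $\Gamma_0$, the relator $t^{2^k}$ becomes a family of conjugates of proper powers $w_i^{2^k}$.

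\textbf{Step 3: applying Lackenby.} Lackenby's theorem says that adjoining to a large group relators that are sufficiently high powers preserves largeness. This holds once the exponent is divisible by a suitable integer depending on the group and on the elements; in the deficiency version of \cite{BT11}, it suffices that the $p$-deficiency stays positive. Since the exponent $2^k$ increases with $s$, and $2$-deficiency counts each relator $w^{2^k}$ with weight about $1/2^k$, the $2$-deficiency of $\Gamma_0$ with the added relators tends to the rank minus one. Hence it becomes positive for large $s$, which gives largeness of $\Pold_s(G)$ and an explicit threshold for $s$.

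\textbf{Main obstacle.} The hardest step is Step 1: pinning down exactly which relators survive, and in particular proving that the only constraints on $t$ alone are power relations whose exponent grows with $s$, with no extra relations tying $t$ to $N$ that might collapse largeness. I would first compute $\Pold_s(C_2)$, which should be a finite $2$-group or cyclic-type group with order growing in $s$. Then I would check that the mixed difference relators are consequences of the twisting relations, by expanding $\mypartial_{g_1,\dots,g_{s+1}}\iota_s(1)$ with each $g_i\in N\cup Nc$ and using that $\iota_s|_N$ is a homomorphism.
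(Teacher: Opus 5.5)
\textbf{There is a genuine gap, and it sits in Step 1.} Your reduction of $\iota_s|_N$ to a homomorphism by iterating Lemma~\ref{lem:lowerdeg} is fine. Your Steps 2--3 (a finite-index free subgroup of $N*\mathbb{Z}$, high-power relators, Lackenby, a $2$-deficiency count) also match the paper's architecture. But the relative presentation you aim for in Step 1 is wrong exactly where it matters, and with it Step 2 cannot succeed.

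You expect honest twisting relations $tnt^{-1}=\alpha(n)$ (up to an unspecified correction) plus power relators in $t$ alone. In Step 2 you then keep the twisting relators and delete only the $t$-power. But $(N*\langle t\rangle)/\llangle tnt^{-1}\alpha(n)^{-1} : n\in N\rrangle\cong N\rtimes_\alpha\mathbb{Z}$ is virtually cyclic because $N$ is finite. It has no free subgroup of rank $2$, and adding $t^{2^k}$ makes it finite.

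In fact $c=\iota_s(t)$ does \emph{not} normalize $\iota_s(N)$ in $\Pold_s(G)$. The paper shows
\[
\Pold_s(G)\cong\langle N,c\mid (n^{-1}c\,\alpha(n)c^{-1})^{M}=(\alpha(n)^{-1}c^{-1}nc^{-1})^{M}=1,\ n\in N\rangle,\qquad M=2^{s-1}.
\]
So the relations linking $c$ and $N$ are themselves only $M$-th powers, and $c^{2^s}=1$ is just the case $n=1$ of the second family. Largeness comes precisely from the failure of exact twisting relations. (Incidentally $\Pold_s(C_2)\cong C_{2^s}$, so the exponent on $c$ is $2^s$, not $2^k$ with $k\approx\log_2 s$.)

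\textbf{The missing idea is how to derive this presentation.} Take a unital polynomial $f\colon G\to H$ of degree $\le s$ and set $c=f(t)$. The maps $n\mapsto f(n)$ and $n\mapsto f(nt)c^{-1}$ are homomorphisms on the perfect group $N$. So are the analogous restrictions of the normalized difference $\beta_t(x)=c^{-1}f(tx)f(x)^{-1}$, which has degree $\le s-1$. Using $\alpha(n)t=tn$ and $\alpha^2=1$, one shows that both $\mathfrak{c}=\beta_t|_N$ and $n\mapsto\mathfrak{c}(n)^{-1}$ are homomorphisms. Hence $\mathfrak{c}(N)$ is abelian and perfect, so trivial. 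This gives $f(gn)=f(g)f(n)$ for all $g\in G$, $n\in N$. That identity is also what you need to justify your unproved claim that $\iota_s(N)$ and $\iota_s(t)$ generate $\Pold_s(G)$.

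Consequently every $\beta_g(x)=f(g)^{-1}f(gx)f(x)^{-1}$ factors through $G/N\cong C_2$. Given this structure, $f$ has degree $\le s$ iff each $\beta_g(t)$ has order dividing $2^{s-1}$, which produces exactly the relators above.

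Only then does your Step 2 work. Take $\Gamma=N*\langle c\rangle$ with \emph{all} relators deleted, and $F=\ker(\Gamma\to G)$ with $c\mapsto t$. Since this map is injective on $N$, $F$ is free by Kurosh, of rank $2q-1$ where $q=|N|$. Every root $u_n,v_n$ lies in $F$, and the kernel of $\Pold_s(G)\to G$ is $F$ modulo finitely many conjugates of $M$-th powers, to which Lackenby applies. Your alternative of a finite quotient killing $N$ would not give a free kernel, since that kernel contains $N$.
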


\begin{proof}
    Let $N$ be a nontrivial finite perfect group of order $q$. Let $\alpha\in\Aut(N)$ be an involution. Form $G=N\rtimes_\alpha \langle t\rangle$, where $\langle t\rangle\cong C_2$. We claim that $\Pold_s(G)$ are nonamenable for sufficiently large $s$.

    Let $f:G\to H$ be a unital polynomial of degree $\le s$, and denote $c=f(t)$. Consider the maps $\mathfrak{a}:N\to H$ and $\mathfrak{b}:N\to H$ given by
    $\mathfrak{a}(n)=f(n)$ and $\mathfrak{b}(n)=f(nt)c^{-1}$. Clearly, $\mathfrak{a}$ and $\mathfrak{b}$ are unital polynomials of degree $\le s$ on $N$. As $N$ is perfect, they are homomorphisms by \cite[]{JT24}.

    Define $\beta_t:G\to H$ by $\beta_t(x)=c^{-1}f(tx)f(x)^{-1}$. This is a normalized left difference of $f$, hence it is a unital polynomial of degree $\le s-1$ on $G$. Consider the maps $\mathfrak{c}:N\to H$ and $\mathfrak{d}:N\to H$ given by $\mathfrak{c}(n)=\beta_t(n)$ and $\mathfrak{d}(n)=\beta_t(nt)\beta_t(t)^{-1}$. These are unital polynomials of degree $\le s-1$ on $N$, hence $\mathfrak{c}$ and $\mathfrak{d}$ are also homomorphisms.
    
    A direct calculation shows that the relation $\alpha(n)t=tn$ yields
    \begin{equation}
        \label{eq:Nt1}
        \mathfrak{c}(n)=c^{-1}\mathfrak{b}(\alpha(n))c\mathfrak{a}(n)^{-1}.
    \end{equation}
    We also note that $\beta_t(t)=c^{-1}f(t^2)f(t)^{-1}=c^{-2}$. This, together with straightforward calculation, shows that
    \begin{equation}
        \label{eq:Nt2}
        \mathfrak{d}(n)=c^{-1}\mathfrak{a}(\alpha(n))c^{-1}\mathfrak{b}(n)^{-1}c^2.
    \end{equation}
    We conclude that
    $$c\mathfrak{d}(\alpha(n))c^{-1}=\mathfrak{a}(\alpha^2(n))c^{-1}\mathfrak{b}(\alpha(n))^{-1}c=\mathfrak{c}(n)^{-1}.$$
    This shows that the map $n\mapsto\mathfrak{c}(n)^{-1}$ is a homomorphism $N\to H$. But $\mathfrak{c}$ is also a homomorphism, therefore we see that $\mathfrak{c}(N)$ needs to be abelian. But this is also a perfect group, hence $\mathfrak{c}(N)=1$, and therefore $\mathfrak{d}(N)=1$.

    The equation \eqref{eq:Nt1} yields $\mathfrak{b}(\alpha(n))=c\mathfrak{a}(n)c^{-1}$, thus $\mathfrak{b}(n)=c\mathfrak{a}(\alpha(n))c^{-1}$. In summary, we have that $f(n)=\mathfrak{a}(n)$ and $f(nt)=c\mathfrak{a}(\alpha(n))$ for all $n\in N$. This implies that 
    \begin{equation}
        \label{eq:Nt3}
        f(gn)=f(g)\mathfrak{a}(n)
    \end{equation}
    for all $g\in G$ and all $n\in N$.

    Given $g\in G$, consider the normalized difference map $\beta_g:G\to H$ given by $\beta_g(x)=f(g)^{-1}f(gx)f(x)^{-1}$. Given $x\in G$ and $n\in N$, we get from Equation \eqref{eq:Nt3} that $\beta_g(xn)=\beta_g(x)$. This shows that $\beta_g$ factors through $G/N\cong C_2$.

    We recall from \cite{JT24,AT26} that $\Pold_s(C_2)\cong C_{2^s}$. In other words, if $p:C_2\to H$ is a unital map sending the generator of $C_2$ to $u\in H$, then $p$ is a polynomial of degree $\le s$ if and only if $u^{2^s}=1$.

    Given $n\in N$, denote
    $$u_n=\beta_n(t)=\mathfrak{a}(n)^{-1}c\mathfrak{a}(\alpha(n))c^{-1}$$
    and
    $$v_n=\beta_{nt}(t)=\mathfrak{a}(\alpha(n))^{-1}c^{-1}\mathfrak{a}(n)c^{-1}.$$
    Note that $f$ is a polynomial of degree $\le s$ if and only $\beta_g$ are polynomials of degree $\le s-1$ for all $g\in G$. By the above, this is precisely when there exist a homomorphism $\mathfrak{a}:N\to H$, and an element $c\in H$ such that
    \begin{align*}
        f(n) &= \mathfrak{a}(n),\\
        f(nt) &= c\mathfrak{a}(\alpha(n)),\\
        u_n^M &= 1,\\
        v_n^M &=1,
    \end{align*}
    for all $n\in N$, where $u_n$ and $v_n$ are as above, and $M=2^{s-1}$. This immediately implies that
    $$\Pold_s(G)\cong \langle N,c\mid (n^{-1}c\alpha(n)c^{-1})^M=(\alpha(n)^{-1}c^{-1}nc^{-1})^M=1,\, n\in N\rangle.$$
    Note that the second set of relators implies $c^{2M}=1$.

   Let $\Gamma_0=N*\langle c\rangle \cong N*\mathbb Z,$
    where at this stage $c$ has infinite order.  The relation
    $c^{2^s}=1$ will be imposed below as one of the
    $2^{s-1}$-power relations. We have an epimorphism $\eta:\Gamma\to G$ sending $n\mapsto n$, $c\mapsto t$. Let $F$ be the kernel of $\eta$. As $\eta$ is injective on $N$, Kurosh's theorem implies that $F$ is free, and
    $|\Gamma:F|=|G|=2q$. By multiplicativity of the group Euler characteristic under passage to
    finite-index subgroups and the free-product formula
    \cite[Chapter IX, Proposition 7.3]{Bro82}, we have
    $$\chi(F)=[\Gamma:F]\chi(\Gamma)=2q\left(\frac1{q}-1\right)=2-2q.$$
    This shows that $F$ is free of rank $2q-1$, hence non-abelian. Note also that the "roots" of all relators of $\Pold_s(G)$ belong to $F$:
    \begin{align*}
        \eta(n^{-1}c\alpha(n)c^{-1}) &=n^{-1}t\alpha(n)t^{-1}=1,\\
        \eta(\alpha(n)^{-1}c^{-1}nc^{-1}) &=\alpha(n)^{-1}tnt=1.
    \end{align*}
    Choose a transversal $\mathcal{T}$ for $F$ in $\Gamma$. As $F$ is a normal subgroup of $\Gamma$, we conclude that the kernel of the epimorphism $\Pold_s(G)\twoheadrightarrow G$ has the form
    $$F_s=F/\llangle (\gamma r\gamma^{-1})^M\mid r\in\mathcal{R},\gamma\in\mathcal{T}\rrangle,$$
    where 
    $$\mathcal{R}=\{ u_n\mid n\in N, n\neq 1\} \cup\{v_n\mid n\in N\}$$
    is a fixed finite set independent of $s$, and the same holds for $\mathcal{T}$. By a result of Lackenby \cite[Theorem 1.2]{Lac07}, $F_s$ is large for sufficiently large $s$. As $F_s$ has finite index in $\Pold_s(G)$, it follows that the latter group is large for sufficiently large $s$. 
    \end{proof}

    The above result gives no particular lower bound for $s$. Keeping the notations of the proof of Proposition \ref{prop:c2perfect}, we have a presentation of $F_s$ with $r=2q-1$ generators and $\le 2q(2q-1)=2qr$ relators, each of which is a  $2^{s-1}$-power of an element in $F$. Hence the $2$-deficiency of $F_s$ is bounded from below as follows:
    $$\deff_2(F_s)\ge r-\frac{2qr}{2^{s-1}}.$$
    It is shown in \cite{BT11} that if the $p$-deficiency of a group is bigger than 1, then the group is large. Thus the largeness of the group $F_s$, and therefore of $\Pold_s(G)$ is ensured by the condition
    $$2^{s-1}>\frac{2qr}{r-1}.$$
    For example, consider the group $\Sym(n)$, where $n\ge 5$. Then we can 
    take $N=\Alt(n)$, and $\Pold_s(\Sym(n))$ is large if
    $$s>1+\log_2\frac{n!(n!-1)}{n!-2}.$$

\section*{Use of AI tools}

\noindent
ChatGPT (GPT-5.6, OpenAI) was used to
assist in drafting parts of this manuscript. All content was reviewed and substantially
revised by the author, who is responsible for the final text.




\end{document}